\newtheorem{theorem}{Theorem}
\newtheorem{proposition}{Proposition}
\newtheorem{remark}{Remark}
\newtheorem{observation}{Observation}
\newtheorem{corollary}{Corollary}
\author{Sandip Das\affiliationmark{1}
  \and Siddani Bhaskara Rao\affiliationmark{2}
  \and Uma kant Sahoo\thanks{Email: umakant.iitkgp@gmail.com} \affiliationmark{1}}
\title[Pseudoline arrangement graphs: degree sequences and eccentricities]{Pseudoline arrangement graphs: degree sequences and eccentricities\thanks{An \href{https://drive.google.com/file/d/1QwGoaH6zfsgNmWVbvPf1dJP29jyLF-Pj/view?usp=sharing}{extended abstract}~\cite{das_degree_2021} was presented in CALDAM~2021, containing the proof of the corresponding graph realization problem.}}
\affiliation{
  Indian Statistical Institute, Kolkata, India\\
  CRRAO Advanced Institute of Mathematics, Statistics and Computer Science, Hyderabad, India
 }
\keywords{pseudoline arrangements, arrangement graphs, graph realization problem, eccentricity, diameter, radius}
\begin{document}
\publicationdetails{VOL}{2015}{ISS}{NUM}{SUBM}
\maketitle
\begin{abstract}
  A pseudoline arrangement graph is a planar graph induced by an embedding of a (simple) pseudoline arrangement. 
  We study the corresponding graph realization problem and properties of pseudoline arrangement graphs. 
  In the first part, we give a simple criterion based on the degree sequence that says whether a degree sequence will have a pseudoline arrangement graph as one of its realizations. 
  In the second part, we study the eccentricities of vertices in such graphs. 
  We observe that the diameter (maximum eccentricity of a vertex in the graph) of any pseudoline arrangement graph on $n$ pseudolines is $n-2$. 
  Then we characterize the diametrical vertices (whose eccentricity is equal to the graph diameter) of pseudoline arrangement graphs. 
  These results hold for line arrangement graphs as well. 
\end{abstract}

\section{Introduction}
\label{sec: 1 Introduction}
Both \textit{line} and \textit{pseudoline arrangements} are basic objects of study in discrete and computational geometry (see \cite[Chap. 5, 6]{felsner_geometric_2004}; also see \cite{felsner_pseudoline_2004} and the references therein). 
Their embeddings in the plane are a natural source of graphs. 
These graphs form a well-structured family of planar graphs having many interesting properties: starting from recognition~\cite{bose_properties_2003,eppstein_drawing_2014} to other graph characteristics~\cite{bose_properties_2003,felsner_hamiltonicity_2006}. 
They are also used to find the computational complexities of various geometric graph parameters~\cite{chaplick_complexity_2017, durocher_note_2013, okamoto_variants_2019}. 
However, these graphs are not that well studied. 
In this article, we study the corresponding \textit{graph realization problem} (described later) and the eccentricities of vertices in such graphs, which are formed by the embeddings of pseudoline arrangements in the Euclidean plane $\mathbb{R}^2$.

A \emph{pseudoline}\footnote{The definition used here is equivalent to the following: A pseudoline is the image of a line under a homeomorphism of the Euclidean plane. This is less restrictive~\cite{eppstein_drawing_2014} than the alternative definition: A pseudoline is a
non-contractible simple closed curve in the projective plane.} is a curve that approaches a point at infinity in either direction. 
An \emph{arrangement} $\mathcal{A}(L)$ of pseudolines in the Euclidean plane $\mathbb{R}^2$ is a collection $L$ of (at least three) pairwise intersecting pseudolines. Each pair of pseudolines intersect exactly once where they \emph{cross} each other. 
It is \emph{simple} if no three pseudolines meet at a point. 
The class of \emph{pseudoline arrangement graphs} $\mathcal{G}_{\mathcal{L}}$ are graphs induced by simple arrangements $\mathcal{A}(L)$, for any set of pseudolines $L$, whose vertices are intersection points of pseudolines in $L$, and there is an edge between two vertices if they appear on one of the pseudolines, say  $l\in L$, with no other vertices in the part of $l$ between the two vertices. 
The realization of a pseudoline arrangement graph $G_{L}$ by pseudolines in $L$ is its \emph{pseudoline arrangement realization} $R(G_{L})$. 
We get a pseudoline arrangement realization from its corresponding pseudoline arrangement by deleting the two infinite segments of each pseudoline. 
We have analogous definitions by replacing pseudolines with lines. 
Using a classical result of Whitney~\cite{whitney_congruent_1932} on planar graphs, Bose et al.~\cite{bose_properties_2003} showed that given a line arrangement graph, its line arrangement realization is unique up to isomorphism. 
Eppstein~\cite{eppstein_drawing_2014} extended this result to show the uniqueness of pseudoline arrangement realization (up to isomorphism).

\medskip
Pseudoline arrangements naturally generalize line arrangements and preserve their basic topological and combinatorial properties. 
It is well known that pseudoline arrangements strictly contain line arrangements (see~\cite{felsner_geometric_2004, grunbaum_arrangements_1972}). 
This relation is inherited by their corresponding graph classes.  
We focus on the general class: pseudoline arrangement graphs. 
The problems addressed in this article are graph-theoretic in nature, and their proofs have a geometric and topological flavor. 
As expected, our results hold for both the graph classes --- they do not depend on the \textit{straightness} of the lines. 
(In contrast, the computational complexities differ for their corresponding recognition problems; see Section~\ref{subsec: survey} for details.)

\medskip
\paragraph{Summary of our results.} We study the corresponding \emph{graph realization problem} on pseudoline arrangement graphs and the \emph{eccentricities} of its vertices. 
In particular, we prove that given a finite sequence of numbers, whether there is a pseudoline arrangement graph whose degrees correspond to the numbers in the sequence. 
We present this result in Section~\ref{ssec:degreeSequence} and prove it in Section~\ref{sec:degreeSequence}. 
We find the graph diameter of pseudoline arrangement graphs. 
Surprisingly, the diameter depends only on the number of pseudolines in its realization, and not on the graph structure. 
Our main result characterizes the diametrical vertices of a pseudoline arrangement graph, that is, the vertices whose eccentricity is equal to the graph diameter. 
We present these results on eccentricity in Section~\ref{ssec:eccentricity} and prove them in Section~\ref{sec:eccentricity}.

\paragraph{Organization.}  In the rest of this section, we summarize the relevant known results in the pseudoline arrangement graphs. In Section~\ref{sec:our results}, we state our results. In Section~\ref{sec:preliminaries}, we introduce some necessary tools and definitions. In Section~\ref{sec:degreeSequence}, we prove Theorem~\ref{th:degreeSequence}. And in Section~\ref{sec:eccentricity}, we present our results on the eccentricity of vertices in pseudoline arrangement graphs, leading to proofs of Proposition~\ref{th:diameter} and Theorem~\ref{th:outerlayer}. 
We conclude with some remarks in Section~\ref{sec:final}.

\medskip
\subsection{Survey of known results} \label{subsec: survey}
Steiner~\cite{steiner_einige_1826} studied line arrangements in 1826, and Levi~\cite{levi_teilung_1926} introduced pseudoline arrangements in 1926. 
However, it was the survey book by Gr\"unbaum~\cite{grunbaum_arrangements_1972} and,a few years later, the topological representation theorem of Folkman and Lawrence~\cite{folkman_oriented_1978} (amongst others) that have driven research in this field in the last five decades. 
For further details on line and pseudoline arrangements, see the surveys by Erd\H{o}s and Purdy~\cite{erdos_extremal_1995}, and Felsner and Goodman~\cite{felsner_pseudoline_2004}, and the book by Felsner~\cite[Chap. 5 and 6]{felsner_geometric_2004}. For a brief survey, see Section~\ref{ssec: survey pseudoline arrangements}. Before moving on to arrangement graphs, we want to highlight an important computational complexity question on arrangements, namely \emph{stretchability}: whether a pseudoline arrangement can be converted to an isomorphic line arrangement. Shor~\cite{shor_stretchability_1990} proved this problem to be NP-hard, and Schaefer~\cite{schaefer_complexity_2009} proved it to be $\exists \mathbb{R}$-hard. 

\smallskip
Now we discuss the known results on the focus of this paper: arrangement graphs. 
Bose et al.~\cite{bose_properties_2003} introduced the notion of a line arrangement graph in \textit{EuroCG}, 1998. 
This graph definition almost resembles the one given by Eu, Gr\'evremont and Toussaint~\cite{eu_envelopes_1996}, who gave an efficient algorithm for finding the envelope of a line arrangement, that is, the outer face of the line arrangement realization. 
Bose et al.~\cite{bose_properties_2003} proved that recognizing line arrangement graphs is NP-hard by reduction from \textit{simple stretchability}. 
Amongst other results, they gave examples of non-Hamiltonian line arrangement graphs.

Soon Felsner et al.~\cite{felsner_hamiltonicity_2006} showed pseudoline arrangement graphs are $4$-edge-colorable and 3-vertex-colorable. 
They also studied corresponding graphs got from other ambient spaces. 
They showed that projective pseudoline arrangement graphs are 4-connected and 4-vertex-colorable, and when the number of pseudolines is odd, they can decompose into two edge-disjoint Hamiltonian paths. 
They also showed that \textit{circle arrangement graphs} (great circles on a sphere) are 4-connected, 4-edge-colorable and 4-vertex-colorable, and their generalization,  \textit{pseudocircle arrangement graphs}, decompose into two edge-disjoint Hamiltonian cycles. 
Eppstein~\cite{eppstein_drawing_2014} gave a linear-time algorithm to draw a (pseudo) line arrangement graph in a grid of area $O(n^{7/6})$. 
He also proved that any pseudoline arrangement graph causing the algorithm to use $\Omega(n^{1+\epsilon})$ area would imply significant progress in the \emph{$k$-set problem} (see~\cite[Chap. 11]{matousek_lectures_2002}) in combinatorial geometry. 

Coming back to the computational complexity aspects of these graph classes, it follows from Schaefer~\cite{schaefer_complexity_2009} that \emph{simple stretchability} is $\exists\mathbb{R}$-hard. 
Hence the reduction of Bose et al.~\cite{bose_properties_2003} also implies that recognizing line arrangement graphs is $\exists\mathbb{R}$-hard (see \cite[p. 212]{eppstein_drawing_2014}). 
This result/reduction is used (i) by Durocher et al.~\cite{durocher_note_2013} to prove that checking whether there is a straight-line drawing of a planar graph with at most $k$ segments is NP-hard (in fact, it is $\exists\mathbb{R}$-hard), (ii) by Chaplick et al.~\cite{chaplick_complexity_2017} to prove that the \textit{line cover number} of a planar graph in two dimensions and a graph in three dimensions are $\exists\mathbb{R}$-hard to compute, and (iii) by Okamoto, Ravsky and Wolff~\cite{okamoto_variants_2019} to prove that many variants of the \textit{segment number} of a planar graph are $\exists\mathbb{R}$-hard to compute. 
On the other hand, Eppstein~\cite{eppstein_drawing_2014} proved that pseudoline arrangement graphs can be recognized in linear time. 
At its core, this recognition algorithm builds upon the ideas of Bose et al.~\cite{bose_properties_2003}.

\section{Our Results}\label{sec:our results}

\subsection{The Pseudoline Arrangement Graph Realization Problem}\label{ssec:degreeSequence}
The \emph{degree sequence} of a graph is the non-increasing list of degrees of its vertices. 
A graph with degree sequence $\pi$ is a \textit{realization} of $\pi$. 
Given an arbitrary finite sequence of non-increasing numbers $\pi$, the \emph{graph realization problem} asks whether a graph realizes $\pi$. 
Researchers have studied this classical problem from graph theory for the past six decades. 
The Erd\H{o}s-Gallai theorem~\cite{erdos_graphs_1961} and the Havel-Hakimi algorithm~\cite{havel_remark_1955, hakimi_realizability_1962} (strengthening of the former) are two popular methods to solve the graph realization problem. 
We discuss a similar problem. 
\paragraph{Pseudoline Arrangement Graph Realization Problem.} Given a sequence of finite numbers $\pi$, whether there is a pseudoline arrangement graph with degree sequence $\pi$. 

\smallskip
The following theorem solves this problem. 
For an affirmative answer, we construct a (pseudo) line arrangement realization. 
We give this construction and the proof in Section~\ref{sec:degreeSequence}. 

A vertex with degree $i$ is an \textit{$i$-vertex}, for $2\leq i\leq 4$; let $d_i$ denote the number of $i$-vertices. Let $\langle a^d \rangle$ denote the sequence $\langle a,\ldots,a \rangle$ of length $d$.

\begin{theorem}\label{th:degreeSequence}
	A finite non-increasing sequence of positive numbers $\pi$ is a degree sequence of a pseudoline arrangement graph if and only if it satisfies the following two conditions. 
	\begin{enumerate}
		\item $\pi = \left\langle 4^{d_4}, 3^{d_3}, 2^{d_2} \right\rangle$ with  $3\leq d_2 \leq n$, $d_3 = 2(n-d_2)$ and $d_4 = n(n-5)/2 + d_2$ for some integer $n \geq 3$.
		\item If $d_2=n$, then $n$ is odd.
	\end{enumerate}
\end{theorem}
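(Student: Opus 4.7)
The plan is to split the proof into necessity and sufficiency. For necessity, begin with counting. Each vertex of an arrangement graph on $n$ pseudolines is a crossing of exactly two pseudolines, and on each of them it is either an endpoint (the first or last crossing along that pseudoline, contributing one edge) or a middle crossing (contributing two edges); thus every degree lies in $\{2,3,4\}$. The total number of vertices is $\binom{n}{2}$, and double-counting pairs (vertex, pseudoline) for which the vertex is an endpoint gives $2d_2+d_3=2n$, since each pseudoline has exactly two endpoints. Solving these two relations yields $d_3=2(n-d_2)$ and $d_4=n(n-5)/2+d_2$; the bound $d_2\leq n$ then follows from $d_3\geq 0$. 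For the lower bound $d_2\geq 3$, I would extract three distinct $2$-vertices by considering extremal crossings. The leftmost and rightmost swaps in a wiring-diagram representation are each $2$-vertices (each is simultaneously the first or last swap on both its wires), and a third is obtained as a further extremal swap, e.g.\ by a ``topmost'' sweep; for line arrangements this amounts to three convex-hull vertices of the crossing set.

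The parity condition requires a topological argument at infinity. When $d_2=n$ and so $d_3=0$, every pseudoline-end at infinity belongs to a $2$-vertex. At each $2$-vertex $v$ where pseudolines $l_1,l_2$ meet, the two deleted infinite rays of $l_1,l_2$ bound an \emph{outer-face sector} at $v$. I would argue that no other pseudoline has an end at infinity inside this sector: any such pseudoline would be forced to cross $l_1$ or $l_2$ at a point beyond $v$ along a deleted ray, contradicting that $v$ is the first/last crossing on both $l_1$ and $l_2$. Hence the $n$ $2$-vertices match the $2n$ pseudoline-ends at infinity into adjacent pairs in the cyclic order at infinity. Combining this with the antipodal matching coming from the pseudolines themselves (the two ends of each pseudoline being $n$ positions apart in the $2n$-cyclic order) gives a $2$-regular auxiliary graph whose components are cycles traced by alternately advancing by $+1$ (adjacent step) and $+n$ (antipodal step) modulo $2n$. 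A direct calculation shows this alternation returns to its starting position after $2n$ steps when $n$ is odd (one cycle through all positions) but only after $4$ steps when $n$ is even (giving $n/2$ disjoint $4$-cycles); each such $4$-cycle would force two pseudolines to share both $2$-vertex endpoints, which is impossible since a pair of pseudolines crosses at exactly one point. So $n$ must be odd.

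For sufficiency, given any admissible $(n,d_2)$, I would construct an explicit realization. Starting from a base arrangement of $n$ pseudolines in general position (which yields $d_2=3$), one increases $d_2$ one step at a time via local swap moves that modify a suitable $3$- or $4$-vertex neighborhood to create an additional $2$-vertex while preserving simplicity; iterating reaches every value of $d_2$ permitted by the first condition. The extremal case $d_2=n$, permitted only for odd $n$, is realized by the regular $n$-pointed star arrangement (pentagram for $n=5$, heptagram for $n=7$, etc.), whose outer vertices are all $2$-vertices and interior crossings all $4$-vertices. The main obstacle is the parity argument: justifying rigorously that the deleted rays at a $2$-vertex correspond to adjacent pseudoline-ends at infinity, and verifying that the combined auxiliary cycle structure is genuinely incompatible with even $n$. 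The sufficiency direction is more routine but requires an explicit description of the local moves that show every admissible degree sequence is attained.
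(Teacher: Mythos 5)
Your necessity argument is sound in its counting part (identical to the paper's: $d_2+d_3+d_4=\binom{n}{2}$ and $2d_2+d_3=2n$), and your parity argument is correct but takes a genuinely different route. The paper fixes one pseudoline $l$ with $2$-vertex endpoints and double-counts, over the two pseudohalfplanes of $l$, the incidences between $2$-vertices and the spans crossing $l$, obtaining $n=2k+1$ directly. You instead work on the circle at infinity: the claim that no pseudoline end lies in the outer sector at a $2$-vertex is correct (such a pseudoline would have to cross a deleted ray of $l_1$ or $l_2$ beyond the span endpoint), so the $n$ $2$-vertices do partition the $2n$ ends into adjacent pairs, and superposing this with the antipodal matching does yield one $2n$-cycle for odd $n$ and $n/2$ four-cycles for even $n$, each four-cycle forcing two pseudolines to cross twice. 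This is a legitimate, more global alternative. However, your proof of $d_2\geq 3$ has a gap for pseudolines: the leftmost and rightmost crossings of a wiring diagram are indeed $2$-vertices, but ``a further extremal swap, e.g.\ by a topmost sweep'' does not work as stated --- a crossing between the top (or bottom) two wires of an arbitrary wiring diagram need not be a $2$-vertex, and there may be several such crossings. The paper either invokes Levi's theorem on triangles after adding a pseudoline at infinity, or carefully chooses the sweep so that there is exactly \emph{one} crossing between the bottom two levels (which is then provably a $2$-vertex distinct from the leftmost and rightmost ones). Your convex-hull argument does handle the straight-line case, but not pseudolines.

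The serious gap is in sufficiency. You propose to start from an arrangement with $d_2=3$ and increase $d_2$ one step at a time by unspecified ``local swap moves,'' but no such moves are defined, and their existence is precisely the content of this direction; note that raising $d_2$ by one at fixed $n$ must simultaneously destroy two $3$-vertices and create a $4$-vertex, and it is not at all clear that a local modification achieving this while preserving simplicity and pseudoline-ness exists from every intermediate configuration. (Also, ``$n$ pseudolines in general position yields $d_2=3$'' is false: simplicity does not determine $d_2$; an arrangement with $d_2=3$ must be constructed, e.g.\ tangents to points on a short circular arc.) The paper proceeds in the opposite order: it first realizes the target $d_2$ exactly (a star construction on $d_2$ vertices for odd $d_2$, or on $d_2+1$ vertices followed by one explicit ``pull'' operation for even $d_2$), and then performs $n-d_2$ (resp.\ $n-d_2-1$) ``line operations,'' each adding one new line near a fixed $2$-vertex so that $d_2$ is preserved, $d_3$ grows by $2$, and the remaining new crossings are $4$-vertices; a short computation then verifies the resulting degree sequence is exactly $\pi$. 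Without an explicit construction or a proof that your local moves exist and connect all admissible values of $d_2$, the sufficiency direction remains unproved.
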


As expected Theorem~\ref{th:degreeSequence} also holds for line arrangement graphs (see Remark~\ref{re:linedegree}) and so does the following discussions. 

\smallskip
Other graph classes have stronger characterizations based on the degree sequences. 
A graph class $\mathcal{G}$ has a \emph{degree sequence characterization} if one can recognize whether a graph $G \in \mathcal{G}$ or not, based on its degree sequence. 
Hence to recognize whether $G \in \mathcal{G}$ or not, one needs to check if the degree sequence of $G$ satisfies all the conditions of the degree sequence characterizations. This often leads to linear-time recognition algorithms~\cite{bose_characterization_2008,hammer_splittance_1981,merris_split_2003}. 
(We present a brief review in Section~\ref{subsec:degreeSequence}.) 
However, for the following reason, we cannot infer anything about the recognition of pseudoline arrangement graphs from Theorem~\ref{th:degreeSequence}.

A \emph{$2$--switch} operation replaces a pair of edges $xy$ and $zw$ in a simple graph by the edges $yz$ and $wx$, given that $yz$ and $wx$ were not edges in the graph. 
Performing a $2$--switch operation in a graph does not change its degree sequence.  
The class of pseudoline arrangement graphs $\mathcal{G}_{\mathcal{L}}$ is not \emph{closed} under the \emph{$2$--switch} operation, that is, after performing a \emph{$2$--switch} operation in $G_{L}\in \mathcal{G}_{\mathcal{L}}$, the resulting graph may not be in $\mathcal{G}_{\mathcal{L}}$ (easy to check on the arrangement graph induced on four pseudolines). 
This kills all the hope for obtaining a degree sequence characterization for pseudoline arrangement graphs.  
Thus, in this ``sense", Theorem~\ref{th:degreeSequence} is the best one can hope for. 
This is also strongly indicated by the following: 
Theorem~\ref{th:degreeSequence} also holds for line arrangement graphs (see Remark~\ref{re:linedegree}), which are $\exists \mathbb{R}$-hard (and hence NP-hard) to recognize (see~\cite{bose_properties_2003,schaefer_complexity_2009}).  

\smallskip

We further want to highlight that Theorem~\ref{th:degreeSequence} also implies that the class of pseudoline arrangement graphs cannot have a \emph{forbidden graph characterization}, that is, a characterization for recognizing a graph class by specifying a list of graphs that are forbidden to exist as (or precisely, be isomorphic to) an induced subgraph of any graph in the class. A result of Greenwell et al.~\cite{greenwell_forbidden_1973} says that a graph class has a forbidden graph characterization if and only if it is closed under taking induced subgraphs. 
The pseudoline arrangement graphs are not closed under vertex deletions. 
Indeed, Theorem~\ref{th:degreeSequence} implies that deleting any vertex in a pseudoline arrangement graph does not result in a pseudoline arrangement graph. 
Hence pseudoline arrangement graphs cannot have a forbidden graph characterization.

\medskip
The graph realization problem is just a preliminary query in the more general framework of degree-based graph construction problem in \emph{network modeling}~\cite{kim_degree-based_2009}. Given a sequence $\pi$, let $\mathcal{N}$ be the set of realizations of $\pi$ (up to isomorphism) that satisfy some conditions. A degree sequence-based graph construction problem asks (i) if $\mathcal{N}\neq \emptyset$, (ii) if it is possible to construct a member of $\mathcal{N}$, (iii) to find (asymptotics of) $|\mathcal{N}|$, (iv) if there is a way to construct all (or a fraction) of graphs in $\mathcal{N}$, and other questions. We have addressed the first two questions for (pseudo) line arrangement graphs. We leave the other two questions as open problems. The separating examples of line arrangement graphs and pseudoline arrangement graphs imply that the answers to (iii) and perhaps (iv) are going to be different for the two graph classes.

\subsection{Eccentricities in Pseudoline Arrangement Graphs}\label{ssec:eccentricity}

The \emph{distance} $d(u,v)$ between two vertices $u,v$ of a graph $G$ is the length of the shortest path between them. 
The \emph{eccentricity} $e(u)$ of a vertex $u$ is the maximum distance of a vertex in $V(G)$ from $u$. 
A vertex $v$ is an \emph{eccentric vertex} of $u$ if $d(u,v)=e(u)$. 
The \emph{diameter} $d(G)$ of $G$ is the maximum eccentricity of any vertex in $V(G)$. 
A vertex $u$ is \emph{diametrical} if $e(u)=d(G)$. 
The \emph{radius} $r(G)$ of $G$ is the minimum eccentricity of any vertex in $V(G)$. 
A vertex $u$ is \emph{central} if $e(u)=r(G)$. 

Recall that Bose et al.~\cite{bose_properties_2003} were the first to introduce line arrangement graphs and its definition resembles the one given by Eu, Gr\'evremont and Toussaint~\cite{eu_envelopes_1996} who gave an efficient algorithm for finding the envelope of a line arrangement. 
This problem was earlier studied by Ching and Lee~\cite{ching_finding_1985} in 1985. 
However, their focus was on finding the Euclidean diameter of a line arrangement. 
We begin our study of eccentricities in pseudoline arrangement graphs by studying the graph-theoretic analog of this classic computational geometry problem.

\begin{proposition}    \label{th:diameter}%
	The diameter of a pseudoline arrangement graph on $n$ pseudolines is $n-2$. 
\end{proposition}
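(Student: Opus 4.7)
The plan is to prove matching upper and lower bounds on $d(G)$. For the upper bound, let $u,v$ be arbitrary vertices. If $u$ and $v$ lie on a common pseudoline $\ell$, then $\ell$ contains exactly $n-1$ intersection points (one with each other pseudoline), hence $n-2$ consecutive edges; the subpath of $\ell$ between $u$ and $v$ has length at most $n-2$, giving $d(u,v)\leq n-2$.

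If $u=\ell_1\cap\ell_2$ and $v=\ell_3\cap\ell_4$ share no pseudoline, I would consider the four candidate two-leg paths $u\to p_{ij}\to v$ through the pivot vertices $p_{ij}=\ell_i\cap\ell_j$ for $i\in\{1,2\}$ and $j\in\{3,4\}$. The $(i,j)$-candidate has length $d_{\ell_i}(u,p_{ij})+d_{\ell_j}(p_{ij},v)$, so the task reduces to showing that the minimum of these four sums is at most $n-2$. My plan is to express each leg distance as $1$ plus the number of pseudolines in $L\setminus\{\ell_1,\ldots,\ell_4\}$ whose crossing with the relevant $\ell_i$ or $\ell_j$ lies on the corresponding segment, and then use the observation that the four crossings of any extra pseudoline $\ell_k$ with $\ell_1,\ldots,\ell_4$ appear in a fixed linear order along $\ell_k$. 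This order restricts the number of candidate paths that $\ell_k$ can simultaneously lengthen, enabling a pigeonhole/accounting argument to bound the minimum candidate length by $n-2$.

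For the lower bound, fix any pseudoline $\ell$ and take $u,v$ to be the two extreme crossings along $\ell$; the subpath of $\ell$ between them already achieves length $n-2$, so it suffices to rule out shorter paths. Each of the pseudolines in $L\setminus\{\ell,\ell_a,\ell_b\}$, where $\ell_a,\ell_b$ are the pseudolines through $u$ and $v$ respectively, crosses $\ell$ strictly between $u$ and $v$ and hence separates them in the plane. I would translate this into a lower bound on path length by a combined topological/combinatorial argument that accounts for how a single vertex of a path can lie on at most two pseudolines while every separator still needs to be traversed, and that matches this to the $n-2$ edges of the $\ell$-subpath. The main obstacle will be the two accounting steps---the pigeonhole bound for the four pivot candidates in the upper bound, and the sharp separator argument for the extreme vertices in the lower bound---both of which depend on delicate features of pseudoline arrangements that are not captured by crude counting alone.
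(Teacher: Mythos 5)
Both of the ``accounting steps'' you defer are genuine gaps, and in the form you describe them neither one closes. For the upper bound when $u=\ell_1\cap\ell_2$ and $v=\ell_3\cap\ell_4$ share no pseudoline, the pigeonhole over the four pivot candidates does not follow from counting crossings: an extra pseudoline $\ell_k$ can meet up to two legs of each candidate (the two $\ell_1$-legs toward $p_{13}$ and $p_{14}$ may be nested, so one crossing of $\ell_1$ can charge two candidates), and even if the ordering of the four crossings along $\ell_k$ brings its total charge down to $4$, the additional charges coming from $\ell_3,\ell_4$ crossing the $\ell_1$- and $\ell_2$-legs (and symmetrically) push the total excess to $4(n-4)+4$, so the pigeonhole only yields a candidate of length at most $n-1$. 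To finish along these lines you would have to pick the pivot lying on the boundary of $Q_u\cap Q_v$ and prove that no pseudoline crosses both of its legs, which is a topological statement about pseudo-quadrilaterals, not a counting one. The lower-bound sketch has the same defect: ``every separator must be traversed and each vertex lies on at most two pseudolines'' lets one internal vertex absorb two separators, so it only gives $d(u,v)\geq\lceil (n-3)/2\rceil+1$, a factor of two short of $n-2$.

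The single idea that repairs both halves, and which is how the paper argues, is this: consecutive vertices of any path lie on a common pseudoline, so each step of a path introduces at most one new pseudoline; and on a \emph{shortest} path each step introduces exactly one new pseudoline, since otherwise Proposition~\ref{prop:shortest} (the shortest path between two vertices of a pseudoline runs along that pseudoline and is unique) produces a shortcut. Hence a shortest $u,v$-path of length $k$ meets exactly $k+2$ pseudolines (Proposition~\ref{prop:number of lines in shortest path}), which gives $d(u,v)\leq n-2$ for every pair simultaneously, with no case split and no explicit path construction; and for the two end vertices of a pseudoline $\ell$ the same uniqueness statement forces the shortest path to be the full path along $\ell$, of length exactly $n-2$. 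I suggest you prove the ``one new pseudoline per step'' lemma first and derive both bounds from it; your separator observation then becomes a clean corollary rather than a delicate obstacle.
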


Surprisingly, the diameter of a pseudoline arrangement graph is independent of the graph and depends only on the number of pseudolines in its realization. 

As a prelude to Proposition~\ref{th:diameter}, we begin with some basic observations regarding the properties of shortest paths and eccentric vertices in Section~\ref{subsec:basic results on shortest paths}. They vary from the restrictions on the shortest paths between two vertices to the existence of particular types of eccentric vertices. These observations are also of independent interest. Using these observations, or otherwise, we prove Proposition~\ref{th:diameter} in Section~\ref{subsec:diameter and radius}.

\medskip

Our next aim is to find the radius of pseudoline arrangement graphs. 
Unlike the diameter, one can see that the radius of a pseudoline arrangement graph will depend on the graph structure. 
Our central idea is to prove that as we move to the interior of the pseudoline arrangement realization, after iteratively removing the outer layer of vertices, one would expect the eccentricity of vertices in the inner layers to decrease. 
As a first step, we begin by characterizing diametrical vertices (whose eccentricity is $n-2$) in the pseudoline arrangement graphs. 
We prove it in Section~\ref{proof:outerlayer}. 

\begin{theorem}\label{th:outerlayer}
	A vertex $v$ in a pseudoline arrangement graph $G$ is a diametrical vertex if and only if $v$ lies in the outer face of its realization $R(G)$. 
\end{theorem}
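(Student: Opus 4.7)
The plan is to prove both implications of the biconditional using Proposition~\ref{th:diameter} (which gives $d(G) = n-2$) and the observations on shortest paths from Section~\ref{subsec:basic results on shortest paths}.

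For the sufficiency direction (outer face implies diametrical), suppose $v$ lies on the outer face of $R(G)$. Since $e(v) \leq d(G) = n - 2$ always, it is enough to exhibit some $u$ with $d(v, u) \geq n - 2$. When $v$ has degree $2$ or $3$, it is an extreme vertex of some pseudoline $\ell$ through it; take $u$ to be the other extreme of $\ell$. The traversal along $\ell$ gives $d(v, u) \leq n - 2$, and the matching lower bound follows from a planar/topological argument: any alternative path $P$ from $v$ to $u$, together with the arc of $\ell$, forms a closed Jordan curve, and each of the $n - 3$ other pseudolines crossing $\ell$ strictly between $v$ and $u$ must meet this curve a second time (since pseudolines are unbounded), forcing $P$ to incur at least one vertex or edge per such pseudoline. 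When $v$ has degree $4$ but one of its four sectors belongs to the outer face, the candidate $u$ is obtained by walking along the outer-face boundary to a vertex on the ``opposite side'' of the arrangement; an analogous separation argument, now exploiting the outer sector at $v$ as the local escape to infinity, yields the lower bound.

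For the necessity direction (diametrical implies outer face), I argue the contrapositive: if $v$ is an interior vertex, then $e(v) \leq n - 3$. Interiority forces $v$ to have degree $4$, with both pseudolines $\ell_1, \ell_2$ through $v$ having crossings on each side of $v$; in particular, $v$'s position $p$ on $\ell_1$ satisfies $2 \leq p \leq n-2$ (analogously for $\ell_2$). Fix any target $u$. If $u$ shares a pseudoline with $v$ (say $u$ lies on $\ell_1$), the direct traversal gives $d(v, u) \leq \max(p-1, n-1-p) \leq n-3$. Otherwise $u = \ell_i \cap \ell_j$ with $\{i, j\} \cap \{1, 2\} = \emptyset$; consider the four detour paths that go from $v$ along $\ell_\alpha$ to the crossing $\ell_\alpha \cap \ell_\beta$, then along $\ell_\beta$ to $u$, for $(\alpha, \beta) \in \{1, 2\} \times \{i, j\}$. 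A pigeonhole-type counting argument on the positions of the four crossings $\ell_\alpha \cap \ell_\beta$, combined with the interiority of $v$, shows that the minimum of the four detour lengths is at most $n - 3$.

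The main obstacle is the final step of the necessity direction: proving that among the four detour paths at least one has length $\leq n - 3$. This requires simultaneously tracking positions on four pseudolines, and will likely invoke a refined observation from Section~\ref{subsec:basic results on shortest paths} on the structure of shortest paths between vertices sharing no pseudoline. A secondary challenge arises in the sufficiency direction when $v$ has degree $4$: the eccentric $u$ need not share a pseudoline with $v$, so the clean ``travel along $\ell$'' template does not apply directly, and a more subtle planar separation argument that exploits the outer-face sector at $v$ is needed.
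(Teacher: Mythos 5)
Your proposal has genuine gaps in both directions, and in each case the missing step is precisely the hard part of the theorem. For sufficiency, your treatment of a degree-$4$ vertex $v$ on the outer face is only a sketch: ``walking along the outer-face boundary to a vertex on the opposite side'' names no concrete eccentric vertex and the promised ``analogous separation argument'' is not carried out. (Note that this case is unavoidable: e.g.\ in the star construction on $5$ lines the inner pentagon vertices are degree-$4$ vertices on the outer face.) The paper handles all outer-face vertices uniformly and without any degree case split: a vertex on the outer face is incident to an unbounded region $f$ of the arrangement; there is an antipodal unbounded region $f^*$ separated from $f$ by \emph{every} pseudoline, and taking $u$ on the boundary of $f^*$, the separation lower bound of Remark~\ref{re:sep} immediately gives $d(v,u)\geq n-2$. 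Your degree-$2$/$3$ case also re-derives Proposition~\ref{prop:shortest} loosely (a single interior vertex of an alternative path can absorb \emph{two} separating pseudolines, so ``one vertex per pseudoline'' does not directly give the bound); you should simply invoke that proposition.

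For necessity, the reduction to the case where $u$ shares no pseudoline with $v$ is fine, but the four-detour pigeonhole claim is exactly the crux, and you explicitly leave it unproven (``the main obstacle\dots will likely invoke a refined observation''). It is not even clear the strategy works as stated: an L-shaped detour along $\ell_\alpha$ then $\ell_\beta$ can be crossed \emph{twice} by a single other pseudoline (once on each leg), so individual detours can have length exceeding $n-2$, and bounding the minimum of the four requires a genuine argument that you do not supply. The paper takes a different route: an interior vertex $v$ lies inside a pseudotriangle formed by three pseudolines; assuming $d(v,u)=n-2$ for some $u$, every shortest $v,u$-path must lie in $Q_{vu}$ and meet all $n$ pseudolines (Propositions~\ref{prop:shortestquadrant} and~\ref{prop:number of lines in shortest path}), and a rerouting argument --- matching each vertex of a two-pseudoline detour injectively to a ``corresponding vertex'' on the original path --- produces an equally short $v,u$-path that misses one of the three enclosing pseudolines, contradicting Remark~\ref{re:n-2path}. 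Without a proof of your pigeonhole step (or a substitute such as this rerouting/injection argument), the necessity direction is incomplete.
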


Observe that Theorem~\ref{th:outerlayer} fixes the vertices that occur in the outer face of the realization of a (pseudo) line arrangement graph. 
In other words, it characterizes the intersection points of (pseudo) line arrangements that lie in the \textit{envelope} of the arrangement. 
Coincidentally, finding the envelope of a line arrangement is a subproblem pursued by Ching and Lee~\cite{ching_finding_1985} while finding the Euclidean diameter of line arrangements.    

\smallskip

However, we are still to prove any non-trivial bounds on the radius. We hope Theorem~\ref{th:outerlayer} to be a starting point for such a result. 
We suspect the pseudoline arrangement graph got by the \textit{star construction} (defined later) to have the maximum radius. We leave this as an open problem. 
Another open problem is to characterize the central vertices in a pseudoline arrangement graph, that is, vertices whose eccentricity equals the graph radius.  

\medskip
\paragraph{Future works on generalization.} 
We can pose similar questions on the generalized non-simple (pseudo) line arrangements. 
In particular, the question of the graph realization problem and diameter are interesting for general (pseudo) line arrangements. 
For the graph realization problem of general line arrangements, a natural hurdle in fixing the necessary conditions seems to be the  Dirac-Motzkin conjecture~\cite{dirac_collinearity_1951} on \textit{ordinary lines} (for all $n$). 
However, the graph realization problem of general pseudoline arrangements does not have this issue. 

The diameter of general (pseudo) line arrangement graphs will depend on the graph structure (unlike their simple counterparts). 
Hence finding tight lower bounds on the diameter for these (non-trivial) cases seems interesting.  We keep these lines of questioning for future work.

\section{Preliminaries}\label{sec:preliminaries}
\subsection{Tools Used}
We need the following common notions and constructions for pseudoline arrangements. 
See the book by Felsner~\cite[Chap. 6]{felsner_geometric_2004} for definitions and detailed constructions. 
Here we give a succinct description. 
For a fixed unbounded region $f$ of a pseudoline arrangement $\mathcal{A}$ on $n$ pseudolines, there is always an unbounded region $f^*$ that is \emph{separated} from $f$ by all pseudolines. 
Note that the boundaries of $f$ and $f^*$ have two pseudolines in common. 
Fix points $x\in f$ and $x^*\in f^*$. 
We \textit{topologically sweep} the arrangement to form an aesthetic arrangement of polylines (pseudolines made up of line segments) called the \textit{wiring diagram}~\cite{goodman_proof_1980} corresponding to the sweep. 
This process uses \emph{allowable sequences}~\cite{goodman_semispaces_1984}, which we do not describe here. 

Consider the internally disjoint oriented $x^*,x$-curves that do not contain any vertex of the arrangement and that crosses each pseudoline once. 
A \textit{topological sweep} of the arrangement is a sequence $c_0, \ldots, c_r$ of such oriented $x^*,x$-curves with $r=\binom{n}{2}$ such that there is one vertex between the curves $c_i$ and $c_{i+1}$. 
Here $c_0$ is the oriented $x^*,x$-curve such that all the vertices in $\mathcal{A}$ lie to the right of $c_0$ (with respect to the orientation of $c_0$). 
Label the pseudolines from $1$ to $n$ such that $c_0$ intersects the pseudolines in increasing order. 
Next, we form the wiring diagram corresponding to this topological sweep. 

Fix $n$ horizontal wires. 
We confine the pseudolines to these wires, except for the parts where they cross each other. 
Corresponding to the topological sweep, we have a sequence $p_0, \ldots, p_r$ of vertical lines with $p_i$ to the left of $p_j$, for $i<j$.  
The ordering of polylines in which $p_0$ intersects from bottom to top is $1, 2, \ldots, n$. 
Between $p_i$ and $p_{i+1}$, we allow only the two pseudolines that form the vertex between $c_i$ and $c_{i+1}$ to intersect. 
Hence the ordering of the polylines that intersects $p_i$ from bottom to top is the same as the ordering of pseudolines that intersects $c_i$ from $x^*$ to $x$. 
We call this the \textit{wiring diagram} corresponding to the topological sweep. See Figure~\ref{fig0} for an illustration.

\begin{figure}
	\centering
	
	\includegraphics[scale=0.7]{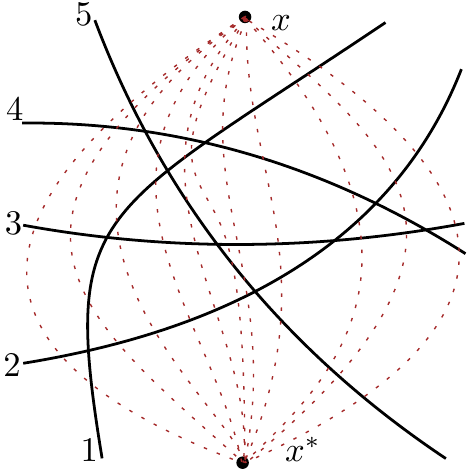}\hspace{1cm} \includegraphics[scale=0.7]{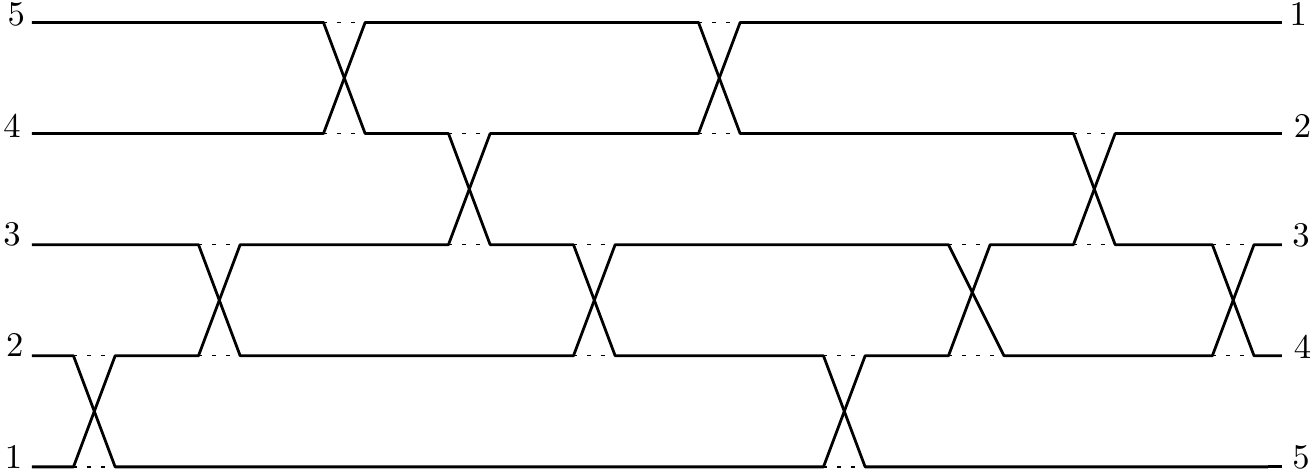}
	\caption{Topological sweep of a pseudoline arrangement and its wiring diagram.}
	\label{fig0}
	
\end{figure}

\subsection{Definitions and Notations}
Since we are dealing with pseudolines, which are topological analogs of lines, we shall come across terms like \emph{pseudohalfplane}, \emph{pseudoquadrant}, \emph{pseudotriangle}, \emph{pseudopolygon} etc., in our arguments; the prefix \emph{pseudo} denotes the topological analog of the following term. 

\smallskip

Let $G_{L}$ be a pseudoline arrangement graph with realization $R(G_{L})$. 
The \emph{span} of each pseudoline is the part of the pseudoline drawn in $R(G_{L})$, that is, the part of the pseudoline between its end vertices. 
A \emph{path} $P$ is a sequence of distinct vertices, such that consecutive vertices are adjacent. 
The \emph{length} of a path $P$, denoted $|P|$, is the number of edges in $P$. 
The length of the shortest $u,v$-path is $d(u,v)$. For vertices $u$ and $v$ in the path $P$ in $G_{L}$, let $uv|_P$ denote the $u,v$-path in $P$, with length $d(u,v)_P$. 
For vertices $u$ and $v$ in line $l \in L$, let $uv|_l$ denote the $u,v$-path in $l$, with length $d(u,v)_l$.  
For points $c$ and $d$ (may not be in $V(G_{L})$) that are on different pseudolines, let $\overline{cd}$ denote the line segment between $c$ and $d$. 
If a vertex $u \in V(G_{L})$ is an intersection point of two pseudolines $l_1$ and $l_2$, then we say $u = l_1\cap l_2$. 
For vertices $u, v \in V(G_{L})$ not lying on a pseudoline $l$, we say $l$ \emph{separates} $u$ and $v$ if they lie on different pseudohalfplanes bounded by $l$.

\smallskip

We give the relevant definitions specific to the proofs in their respective sections.

\section{Pseudoline Arrangement Graph Realization Problem}\label{sec:degreeSequence}

Now we solve the pseudoline arrangement graph realization problem, that is, we prove Theorem~\ref{th:degreeSequence}. 
For the sake of the reader, we restate Theorem~\ref{th:degreeSequence}.

\medskip	
\noindent \textbf{Theorem 1.}
		\textit{A finite non-increasing sequence of positive numbers $\pi$ is a degree sequence of a pseudoline arrangement graph if and only if it satisfies the following two conditions.} 
		\begin{enumerate}
			\item $\pi = \left\langle 4^{d_4}, 3^{d_3}, 2^{d_2} \right\rangle$ \textit{with}  $3\leq d_2 \leq n$, $d_3 = 2(n-d_2)$ \textit{and} $d_4 = n(n-5)/2 + d_2$ \textit{for some integer} $n \geq 3$.
			\item \textit{If} $d_2=n$, \textit{then} $n$ \textit{is odd}.
		\end{enumerate}

\begin{remark}\label{re:linedegree}
	Theorem~\ref{th:degreeSequence} also solves the line arrangement graph realization problem. However, a proof of Theorem~\ref{th:degreeSequence} for pseudoline arrangement graphs does not directly translate to a proof for line arrangement graphs. Recall that pseudoline arrangement graphs strictly contain line arrangement graphs. Hence to prove Theorem~\ref{th:degreeSequence} for both pseudoline and line arrangement graphs, we give a proof of necessity for pseudoline arrangement graphs and a proof of sufficiency for line arrangement graphs. 
\end{remark}

We devote the rest of this section to the proof of Theorem~\ref{th:degreeSequence}. 
In Section~\ref{ssec:sufficiency-degree}, we prove the necessity of Theorem~\ref{th:degreeSequence}. 
In Section~\ref{ssec:constructions}, we define some constructions and operations that we need to prove the sufficiency of Theorem~\ref{th:degreeSequence}, which we prove in Section~\ref{ssec:necessity-degree}. 
	
\subsection{Proof of Necessity}\label{ssec:sufficiency-degree}
	
Suppose the degree sequence $\pi$ has a pseudoline arrangement realization on $n\geq 3$ pseudolines. 
Since every pair of pseudolines intersect, $d_2 + d_3 + d_4 = n(n-1)/2$. 
Each of the end vertices of every pseudoline is either a $2$--vertex or a $3$--vertex: each $2$--vertex is an end vertex of two pseudolines, and each $3$--vertex is an end vertex of one pseudoline. 
Thus $2d_2 + d_3 = 2n$. 
From both these equations, $d_3 = 2(n-d_2)$, $d_4 = n(n-5)/2 + d_2$ and $d_2\leq n$. 

We claim $d_2\geq 3$. 
To see this, we first extend the realization to an arrangement on $n$ pseudolines. 
Next, extend the arrangement to a pseudoline arrangement on $n+1$ pseudolines in the real projective plane by adding an imaginary pseudoline $l$ at infinity. 
A standard result by Levi~\cite{levi_teilung_1926} (also see \cite[Prop. 5.13]{felsner_geometric_2004}) shows that every such pseudoline is incident to at least three triangles. 
In particular, $l$ is incident to at least three triangles in this arrangement in the real projective plane. 
Each of these triangles corresponds to unbounded regions with two pseudolines in its boundary in the arrangement in the Euclidean plane. 
The intersection point of these two pseudolines corresponds to a $2$-vertex in the realization. Hence $d_2\geq 3$. This is tight which can be verified by the following line arrangement on $n$ lines: tangents to the $n$ points uniformly distributed on a circular arc with a right angle. 
	
\smallskip
If $d_2 = n$ then $d_3 = 0$, that is, each of the end vertex of every pseudoline is a $2$--vertex. 
In such an arrangement consider any pseudoline $l$. 
Let $u$ and $v$ be the end vertices of $l$.  
Pseudoline $l$ divides the plane into two open pseudohalfplanes, denoted $l^+$ and $l^-$. 
The other two pseudolines from $u$ and $v$ meet in one of the pseudohalfplanes, say (without loss of generality) $l^+$. 
Let the number of $2$--vertices in $l^+$ be $k$. 
Thus the number of $2$--vertices in $l^-$ is $n-k-2$. 
All other pseudolines, except the three incident at $u$ or $v$, cross the part of $l$ between $u$ and $v$. 
We double count such crossings: for all the $2$-vertices in $l^-$ there are $2(n-k-2)$ such crossings, and for all the $2$--vertices in $l^+$ there are $2k-2$ crossings. 
Thus $2k-2 = 2(n-k-2)$. 
This implies $n = 2k + 1$. 
Hence if $d_2 = n$ then $n$ is odd.
This completes the necessity part of the proof. 

\begin{remark}
One can also derive the first condition of Theorem~\ref{th:degreeSequence} from the argument involving the projective plane in proving $d_2 \geq 3$. 
However, we want to highlight the approach using the two equations, as they hold for graphs induced by a more general arrangement of simple finite curves, which is helpful in alternately proving a result of Kostochka and Ne\v{s}et\v{r}il~\cite{kostochka_coloring_1998}. 
An alternate proof of $d_2\geq 3$ using \emph{wiring diagrams} is given in Section~\ref{alternate d2>3 pseudoline}. 
\end{remark}

\begin{remark}\label{rem:no projective}
Most of the first condition is intuitive and can also be concluded from the observations of Bose et al.~\cite{bose_properties_2003} and Durocher et al.~\cite{durocher_note_2013}. 
This highlights the importance of the second condition in Theorem~\ref{th:degreeSequence} for completing the characterization. 
\end{remark}

To prove the sufficiency of Theorem~\ref{th:degreeSequence}, we construct a line arrangement realization having the given degree sequence. 
First, we need the following construction and operations. Our aim is to first fix the $d_2$ degree two vertices. If $d_2$ is odd, we exploit the idea in the second condition of Theorem~\ref{th:degreeSequence} to build a ``star consruction". If $d_2$ is even, then we begin with a star construction on $d_2+1$ vertices and then remove one degree two vertex by using a ``pull operation". Once $d_2$ is fixed, we fix the $d_3$ degree three vertices by using the ``line operation". In such a operation, the number of degree two vertices remains intact while the number of degree three vertices increases following the equations in the first condition in Theorem~\ref{th:degreeSequence}. This automatically fixes the degree four vertices. 
	
\subsection{Construction and Operations}\label{ssec:constructions}
	
\paragraph{Star construction.} 
For odd $n$, place $n$ vertices uniformly on a circle and join each vertex to its opposite two farthest vertices. 
This results in a line arrangement realization on $n$ lines called a \emph{star construction on $n$ vertices}. 
The center of the circle is called the \emph{center of the star construction}. 
The rest of the vertices are $4$--vertices that lie within the circle. 
The star construction on $n$ vertices has degree sequence $\left\langle 4^{n(n-3)/2},2^{n}\right\rangle $. 
\begin{figure}
	\centering
	\begin{minipage}[b]{0.45\textwidth}
		\includegraphics[width=\textwidth]{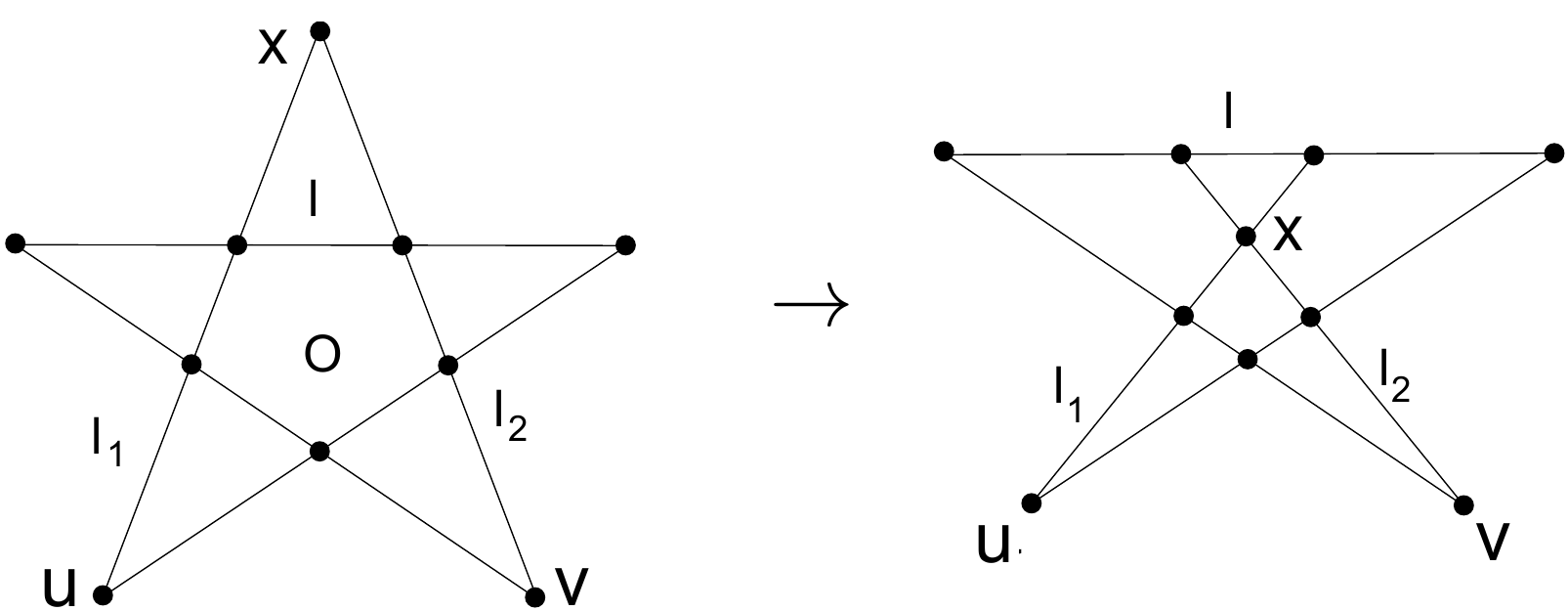}
		\caption{Pull Operation}
		\label{fig1}
		\end{minipage}
		\hfill
		\begin{minipage}[b]{0.5\textwidth}
			\includegraphics[width=\textwidth]{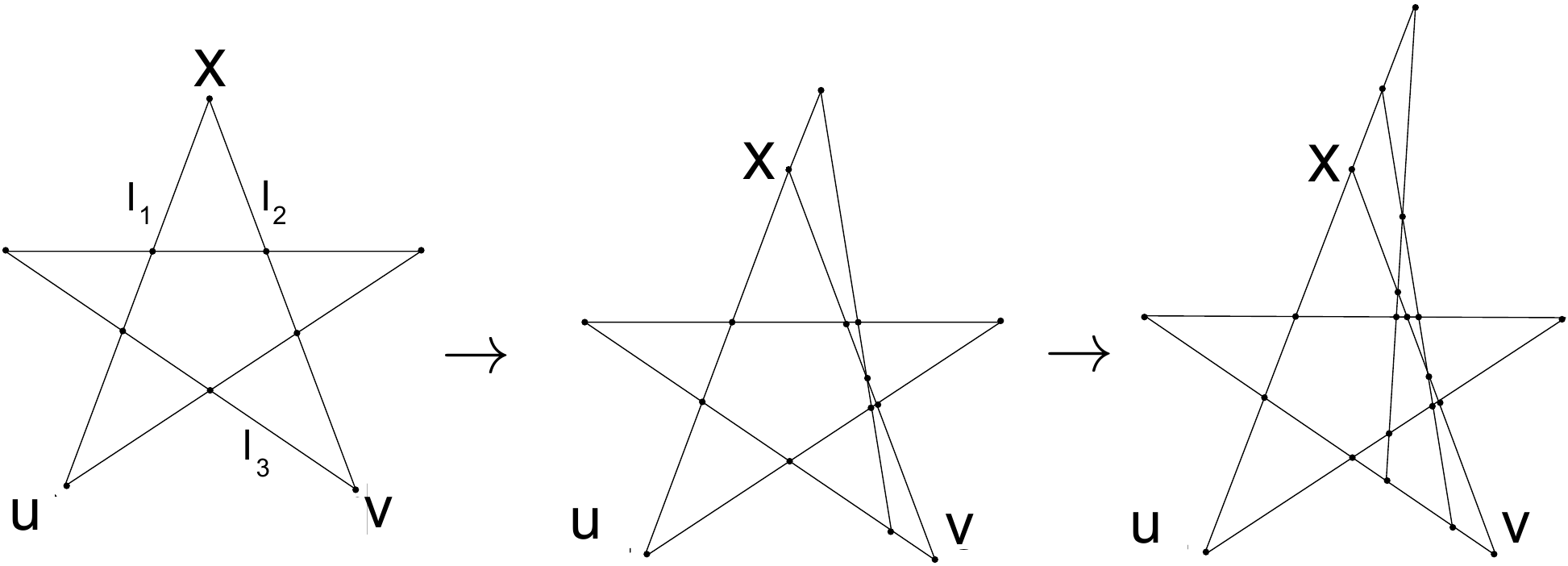}
			\caption{Two line operations on the $2$-vertex $x$}
			\label{fig2}
		\end{minipage}
	\end{figure}

\paragraph{Pull Operation.} 
Consider a $2$--vertex $x$ in a star construction on at least $5$ vertices with $O$ as the center of the star construction. 
Let $x= l_1\cap l_2$ with the other end vertices of $l_1$ and $l_2$ as $u$ and $v$, respectively.  
Let $l$ be the first line crossed while moving from $x$ to $O$ along the line segment $\overline{xO}$. 
Rotate $l_1$ and $l_2$ about $u$ and $v$ such that $x = l_1\cap l_2$ comes closer to $O$ till $x$ crosses $l$, while keeping the slope of $\overline{xO}$ fixed (see Figure \ref{fig1}). 
Now $x$ becomes a $4$--vertex and two new $3$--vertices are created at the expense of two $4$--vertices in the star construction. 
In this operation, the number of $2$--vertices decreases by one. 
Hence the degree sequence changes from $\left\langle 4^{d_4},2^{d_2}\right\rangle $ to $\left\langle 4^{d_4-1},3^2,2^{d_2-1}\right\rangle $.

\paragraph{Line Operation.} 
In the target degree sequence, if $d_2$ is odd, then consider a star construction. 
Choose a $2$--vertex $x= l_1\cap l_2$ with the other end vertices of $l_1$ and $l_2$ as $u$ and $v$ respectively, which are also $2$-vertices. 
Let $l_3$ be the other line intersecting $l_2$ at $v$.   
Take a point on the line $l_1$ that is close to $x$ and just outside the span of $l_1$ in the realization. 
Also, take a point in the realization on the line $l_3$ and close to $v$. 
Joining these two points, we add a new line $l$ to the realization that intersects all the span of other lines except the span of $l_1$ in the realization of the star construction (as shown in Figure \ref{fig2}). 
In the new realization, $l_1\cap l$ forms a new $2$--vertex, making $x$ a $3$--vertex. 
Thus the number of $2$--vertices is unaffected. 
The other end vertex of $l$ is also a $3$--vertex, increasing the number of $3$--vertices by $2$. 
The rest of the new vertices introduced are $4$--vertices. 

By doing $k$ line operations on $x$, we add $k$ such new lines to the realization close to $x$. 
On constructing such new lines we make sure that their intersection points with $l_3$ (and with $l_1$), in order of their addition, form a monotonic sequence of points in $l_3$ (and in $l_1$). 
It ensures that the new line added also intersects the previously added lines before reaching its end vertex, which is a $3$--vertex (refer Figure \ref{fig2}). 
Upon performing $k$ line operations on a star construction on $d_2$ vertices, the number of $2$--vertices remains unchanged; the number of $3$--vertices increases by $2k$; and the number of $4$--vertices increases by $\binom{d_2+k}{2}-\binom{d_2}{2}-2k=k(d_2+\frac{k-5}{2})$. 
Hence the degree sequence changes from $\langle 4^{d_4},2^{d_2}\rangle $ to $\langle 4^{d_4+k(d_2+\frac{k-5}{2})},3^{2k},2^{d_2}\rangle $. 

\medskip
In the target sequence, if $d_2$ is even, then we first consider a star construction on $d_2+1$ with one pull operation on it.  
This realization has $d_2$ $2$--vertices. 
We can also perform line operations on this realization. 
Repeating the above calculations, upon performing $k$ line operations on a star construction on $d_2+1$ vertices with a pull operation, the number of $2$--vertices remains unchanged; the number of $3$--vertices increases by $2k$; and the number of $4$--vertices increases by $\binom{d_2+1+k}{2}-\binom{d_2+1}{2}-2k=k(d_2+\frac{k-3}{2})$.  
Hence the degree sequence changes from $\langle 4^{d_4},3^2,2^{d_2}\rangle $ to $\langle 4^{d_4+k(d_2+\frac{k-3}{2})},3^{2+2k},2^{d_2}\rangle $.

\smallskip
Now we are ready to prove the sufficiency part of the Theorem\ref{th:degreeSequence}. 
	
\subsection{Proof of Sufficiency}\label{ssec:necessity-degree}
Let $\pi$ be a degree sequence satisfying the properties given in Theorem~\ref{th:degreeSequence} for some value of $n$. 
We give an algorithm to draw a line arrangement realization with degree sequence $\pi$.

\paragraph{Algorithm.} 
For odd $d_2$, do a star construction on $d_2$ vertices.
If $d_2 = n$, then $n$ is odd, and we have the required line arrangement realization; else do $n-d_2$ line operations on a $2$--vertex of the star to get the required realization. 
For even $d_2$, do a star construction on $d_2+1$ vertices and then do a pull operation on one of the $2$--vertices, resulting in $d_2$ $2$--vertices. 
If $d_2 = n-1$, then we have the required line arrangement realization; else if $n > d_2+1$, then do $n-d_2-1$ line operations on a $2$--vertex of the star construction to get the required realization. 
This results in a line arrangement realization with degree sequence $\pi$.

\paragraph{Correctness.}
\sloppy For odd $d_2$, a star construction on $d_2$ vertices results in  the degree sequence $\pi_s=\langle 4^{d_2(d_2-3)/2},2^{d_2}\rangle$. 
If $d_2 = n$, then $\pi_s=\pi$. 
If $d_2 < n$, then performing $n-d_2$ line operations increases $d_4$ by $(n-d_2)( d_2+\frac{n-d_2-5}{2} ) = \frac{n^2}{2}-  \frac{d_2^2}{2} -\frac{5n}{2}+\frac{5d_2}{2}$; increases $d_3$ by $2(n-d_2)$; and $d_2$ remains same. 
This results in the degree sequence $\langle 4^{n(n-5)/2+d_2},3^{2(n-d_2)},2^{d_2}\rangle$. 
	
For even $d_2$, a pull operation on a star construction on $d_2+1\geq 5$ vertices results in degree sequence $\pi_{sp}=\langle 4^{(d_2+1)(d_2-2)/2-1},3^2,2^{d_2}\rangle$. 
If $d_2 = n-1$, then $\pi=\langle 4^{n(n-3)/2-1},3^{2},2^{n-1}\rangle=\langle 4^{n(n-5)/2},3^{2},2^{n-1}\rangle$, that is, $\pi_{sp}=\pi$.  
If $d_2 < n-1$, then performing $n-d_2-1$ line operations increases $d_4$ by $(n-d_2-1)( d_2+\frac{n-d_2-4}{2} ) = \frac{n^2}{2}-  \frac{d_2^2}{2} -\frac{5n}{2}+\frac{3d_2}{2}+2$; increases $d_3$ by $2(n-d_2-1)$; and $d_2$ remains same. 
This results in the degree sequence $\langle 4^{n(n-5)/2+d_2},3^{2(n-d_2)},2^{d_2}\rangle$. 
	
So the algorithm gives a realization with degree sequence $\pi$. 
This completes the proof of the sufficiency of Theorem \ref{th:degreeSequence} and hence the proof of Theorem~\ref{th:degreeSequence}. \hfill $\square$

\section{Eccentricities in Pseudoline Arrangement Graphs}\label{sec:eccentricity}
In this section we study eccentricities of vertices in pseudoline arrangement graphs. 
To find the eccentricity of a vertex, we shall find one of its eccentric vertices. 
For this purpose, we derive some basic results on the shortest paths and eccentric vertices in pseudoline arrangement graphs, which are of independent interest. 
First, we need the following definitions. 
	
\smallskip
	
Consider two vertices $u$ and $v$ in a pseudoline arrangement that do not lie on the same pseudoline. 
Let $u=l_1\cap l_2$ and $v=l_3\cap l_4$. 
For each vertex in the pseudoline arrangement, the two intersecting pseudolines divide the Euclidean plane into four pseudoquadrants. 
Let $Q_v$ denote the pseudoquadrant defined by $l_1$ and $l_2$ that contains vertex $v$. 
Similarly, let $Q_u$ denote the pseudoquadrant defined by $l_3$ and $l_4$ that contains vertex $u$. Let $Q_{uv}=Q_u\cap Q_v$. 
	
\subsection{Basic Results on Eccentricities}\label{subsec:basic results on shortest paths}    
\medskip
The following simple observation can be proved by strong induction. 
To maintain the flow, we defer its proof to the Appendix (see Section~\ref{proof:prop:shortest}). 
\begin{proposition}\label{prop:shortest}
	For vertices $u$ and $v$ on pseudoline $l$, the shortest $u,v$-path completely lies on $l$, and this path is unique.
\end{proposition}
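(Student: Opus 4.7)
The plan is to establish both the length bound and uniqueness through a double-counting of ``signature changes'' along arbitrary paths. Let $k$ denote the number of pseudolines in $L\setminus\{l\}$ that cross $l$ strictly between $u$ and $v$; these are precisely the pseudolines that separate $u$ from $v$, and the path along $l$ from $u$ to $v$ uses exactly $k+1$ edges. The goal is to show that every $u,v$-path $P = w_0 w_1 \cdots w_m$ satisfies $m \ge k+1$, with equality only when $P$ lies entirely on $l$.

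To each vertex $w_i$ and each pseudoline $l^* \in L$, assign a signature $\sigma_i(l^*) \in \{+,-,0\}$ recording which open pseudohalfplane of $l^*$ contains $w_i$ (with $0$ when $w_i \in l^*$). Let $\mathrm{chg}(l^*)$ count the indices $i$ where $\sigma_i(l^*) \ne \sigma_{i+1}(l^*)$. The key identity is $\sum_{l^* \in L}\mathrm{chg}(l^*) = 2m$: for each edge $w_i w_{i+1}$ on some pseudoline $l_e$, exactly two pseudolines change signature across this edge, namely the second pseudoline through $w_i$ and the second pseudoline through $w_{i+1}$. To rule out changes for any other pseudoline $l^*$, one uses that $w_i, w_{i+1}$ are consecutive intersection points on $l_e$, so in a simple arrangement no other pseudoline crosses $l_e$ between them, and hence $w_i, w_{i+1}$ lie in the same open pseudohalfplane of every such $l^*$.

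Next, lower-bound the same sum. For each of the $k$ separating pseudolines $l^*$, the endpoints $u, v$ lie strictly on opposite open sides of $l^*$, and no single edge can flip the signature directly between $+$ and $-$ (again, by the consecutivity observation), so $\mathrm{chg}(l^*) \ge 2$. Writing $l^u, l^v$ for the second pseudolines through $u$ and $v$, one also has $\sigma_0(l^u) = 0 \ne \sigma_m(l^u)$ and similarly for $l^v$, forcing $\mathrm{chg}(l^u), \mathrm{chg}(l^v) \ge 1$. Combining these lower bounds with the identity yields $2m \ge 2k+2$, so $m \ge k+1$.

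For uniqueness, suppose $m = k+1$; then every inequality above is tight, so in particular $\mathrm{chg}(l) = 0$, which together with $\sigma_0(l) = 0$ forces $w_i \in l$ for all $i$. An edge both of whose endpoints lie on $l$ must itself lie on $l$, since two distinct vertices of the arrangement cannot lie on two common pseudolines. Hence $P$ is exactly the sequence of consecutive vertices of $l$ from $u$ to $v$, which is unique. The main technical hurdle is establishing the exact identity $\sum_{l^*}\mathrm{chg}(l^*) = 2m$: one must both exhibit the two guaranteed changes at each edge and rigorously exclude changes at any of the remaining $n-3$ pseudolines, and this is where the simplicity of the arrangement (consecutivity of $w_i,w_{i+1}$ on $l_e$) plays its crucial role. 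Once this identity is in hand, both the length bound and the uniqueness statement follow cleanly.
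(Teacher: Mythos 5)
Your proof is correct, but it takes a genuinely different route from the paper's. The paper proves Proposition~\ref{prop:shortest} by strong induction on $d(u,v)_l$: if a competing shortest path shares an internal vertex with the path on $l$, the induction hypothesis applied to the two halves forces the paths to coincide (Observation~\ref{obs:1line}); otherwise the two paths bound a closed curve, and the paper counts the pseudolines newly encountered at each internal vertex of the competitor, shows they are pairwise distinct (again via the induction hypothesis), and observes that all $k$ pseudolines crossing $l$ strictly between $u$ and $v$, plus one extra pseudoline terminating at $v$, must occur among them, so the competitor is strictly longer (Observation~\ref{obs:2line}). Your argument replaces this induction and case split with a single global double count: the identity $\sum_{l^*}\mathrm{chg}(l^*)=2m$ (each edge flips exactly the two ``second'' pseudolines through its endpoints, and consecutivity on the carrying pseudoline rules out any other flip) against the lower bound $2k+2$ coming from the $k$ separating pseudolines and the two non-$l$ pseudolines through $u$ and $v$. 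All the supporting facts check out: the two flipped pseudolines at an edge are distinct (else two pseudolines would meet twice), a separating pseudoline cannot jump from $+$ to $-$ across a single edge, and in the equality case $\mathrm{chg}(l)=0$ with $\sigma_0(l)=0$ pins every vertex, hence every edge, to $l$. What your approach buys is a non-inductive proof that simultaneously yields the quantitative bound $d(u,v)\ge(\text{number of separating pseudolines})+1$, i.e., the collinear case of Remark~\ref{re:sep}, for free; what the paper's approach buys is a hypothesis (uniqueness of shortest subpaths) that it reuses inside Observation~\ref{obs:2line} and elsewhere. Either proof could stand in for the other.
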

	
Next, we study the shortest paths between any two vertices in a pseudoline arrangement graph. 
The following is a consequence of Proposition~\ref{prop:shortest}. 
\begin{proposition}\label{prop:number of lines in shortest path}
	For any two vertices $u$ and $v$, a shortest $u,v$-path of length $k$ has vertices on $k+2$ pseudolines.         
\end{proposition}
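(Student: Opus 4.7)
The plan is to apply Proposition~\ref{prop:shortest} to establish a contiguity property for the shortest path, and then conclude by a double counting of vertex--pseudoline incidences. Let $P = v_0, v_1, \ldots, v_k$ be a shortest $u,v$-path of length $k$, and let $S$ be the set of pseudolines that contain at least one vertex of $P$. The goal is to show $|S| = k + 2$.

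The first step is the following contiguity claim: for any pseudoline $\ell \in S$, the vertices of $P$ lying on $\ell$ occur in consecutive positions along $P$. To establish this, I would pick any two vertices $v_i, v_j$ on $\ell$ with $i < j$; then the subpath $v_i, v_{i+1}, \ldots, v_j$ is a shortest $v_i, v_j$-path (otherwise a shortcut would contradict $P$ being a shortest $u,v$-path). By Proposition~\ref{prop:shortest}, the unique shortest $v_i, v_j$-path lies entirely on $\ell$, so every intermediate $v_t$ must also lie on $\ell$. A small additional observation (worth flagging but routine): if $v_t$ and $v_{t+1}$ both lie on $\ell$ and are adjacent in $P$, then the edge between them is actually an edge of $\ell$, since otherwise both endpoints would be common intersection points of $\ell$ with another pseudoline, violating the fact that two pseudolines cross exactly once.

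With contiguity in hand, let $m_\ell$ denote the number of vertices of $P$ on $\ell \in S$. I then count in two ways. On the vertex side, each of the $k+1$ vertices of $P$ is the intersection of exactly two pseudolines, so $\sum_{\ell \in S} m_\ell = 2(k+1)$. On the edge side, each edge of $P$ lies on a unique pseudoline of $S$; by contiguity, the $m_\ell$ vertices on $\ell$ contribute exactly $m_\ell - 1$ edges of $P$, so $\sum_{\ell \in S} (m_\ell - 1) = k$. Subtracting the two identities yields $|S| = 2(k+1) - k = k + 2$.

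The only nontrivial ingredient is the contiguity claim, whose crux is the uniqueness of shortest paths along a pseudoline supplied by Proposition~\ref{prop:shortest}; once contiguity is established, the rest is mechanical bookkeeping and requires no case analysis on the geometric shape of $P$.
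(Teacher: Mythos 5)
Your proof is correct and rests on the same key fact as the paper's: Proposition~\ref{prop:shortest} forces the vertices of a shortest path lying on any one pseudoline to form a contiguous block, so no pseudoline is ever revisited. The paper simply traverses the path and counts one new pseudoline at each vertex after the first (which contributes two), whereas you package the same count as a vertex--pseudoline incidence and edge double count; the difference is purely bookkeeping.
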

\begin{proof}
Let $P=uv_1v_2\ldots v_{k-1}v$ be a shortest $u,v$-path of length $k$. 
Traverse the path $P$ from $u$ to $v$ and count the new pseudolines encountered. 
At vertex $u$ we encounter two pseudolines. 
At each $v_i$, for $i \in [k-1]$, and at $v$ we encounter a new pseudoline, else by Proposition~\ref{prop:shortest}, the minimality of length of $P$ is contradicted. 
So we encounter $k+2$ pseudolines in total. 
\end{proof}    
	
Our next proposition is the analog of Proposition~\ref{prop:shortest}, for vertices that do not lie on a pseudoline.

\begin{proposition}\label{prop:shortestquadrant}
	For vertices $u$ and $v$ not on the same pseudoline, the shortest path between them completely lies in $Q_{uv}$. 
\end{proposition}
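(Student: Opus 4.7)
The plan is to show that any shortest $u,v$-path $P$ must lie in both $Q_u$ and $Q_v$; by symmetry it suffices to prove $P \subseteq Q_v$, and within that, to prove no vertex of $P$ is separated from $v$ by $l_1$ (the same argument then applies to $l_2$, and to $l_3, l_4$ for $Q_u$). Let $l_1^+$ denote the open pseudohalfplane bounded by $l_1$ that contains $v$, and $l_1^-$ the other one. I would assume for contradiction that $P = u_0 u_1 \ldots u_k$, with $u_0 = u$ (lying on $l_1$) and $u_k = v \in l_1^+$, contains some vertex $u_i \in l_1^-$.

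The key combinatorial fact I would invoke is that every edge of a pseudoline arrangement graph is a segment of some pseudoline lying between two consecutive crossings, so no edge crosses $l_1$ in its interior; in particular no edge can have its two endpoints on opposite open sides of $l_1$. Therefore any path that transitions between $l_1^+$ and $l_1^-$ must pass through a vertex that lies on $l_1$ itself. Since $u_i \in l_1^-$ and $u_k = v \in l_1^+$, this forces the existence of some $u_b$ on $l_1$ with $i < b \leq k$ (take the first return to $l_1$ after index $i$).

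To close the argument I would apply Proposition~\ref{prop:shortest}: the subpath $u_0 u_1 \ldots u_b$ is itself a shortest $u_0, u_b$-path between two vertices of $l_1$, so it must lie entirely on $l_1$. But $u_i$ lies in $l_1^-$ and is therefore strictly off $l_1$, a contradiction. The only delicate point, and the principal obstacle to watch out for, is that $u_0 = u$ already sits on $l_1$ rather than strictly inside $l_1^+$; the whole scheme hinges on producing a \emph{second} vertex $u_b$ of $P$ on $l_1$ after the excursion into $l_1^-$ so that Proposition~\ref{prop:shortest} has bite. This is precisely what the edge-crossing observation guarantees, which is why it must be established first.
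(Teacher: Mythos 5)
Your proof is correct and takes essentially the same route as the paper: both arguments observe that an excursion of $P$ to the wrong side of a bounding pseudoline forces a later vertex of $P$ back on that pseudoline, and then invoke Proposition~\ref{prop:shortest} on the resulting subpath between two vertices of the same pseudoline to contradict minimality. The only cosmetic difference is that you argue halfplane-by-halfplane for $l_1,l_2,l_3,l_4$ while the paper phrases the excursion directly as leaving the closed quadrant $Q_v$ (resp.\ $Q_u$).
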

\begin{proof}
First, we claim that any shortest $u,v$-path lies in $Q_v$. 
Let $u=l_1\cap l_2$. 
For the sake of contradiction, let $P$ be a shortest $u,v$-path that does not lie completely in $Q_{v}$, that is, at least one vertex in $P$ lies \textit{outside} $Q_{v}$. 
By outside, we mean not even in $l_1$ or $l_2$. 
Thus there exists a vertex $v_f$ in $P$ that lies in $l_1$ or $l_2$ such that the vertex just before $v_f$ in $P$ lies outside $Q_{v}$. 
By Proposition~\ref{prop:shortest}, there is a strictly shorter $u,v$-path than $P$. 
This contradicts the minimality of $P$. 
		
Similarly, any shortest $u,v$-path lies in $Q_u$. 
Hence the shortest $u,v$-path lies in $Q_{uv}$.  
\end{proof}

Our next result shows that one of the eccentric vertices of any vertex lies on the outer face. 
	
\begin{proposition}\label{prop:eccentric}
	For a vertex $u \in V(G_{L})$, there exists an eccentric vertex of $u$ that is a $2$--vertex or $3$--vertex.
\end{proposition}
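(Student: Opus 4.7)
The plan is to start with any eccentric vertex $v$ of $u$; if $v$ already has degree 2 or 3, we are done, so assume $v$ is a 4-vertex with $v=l_3\cap l_4$. Fix a shortest $u,v$-path $P$. By Proposition~\ref{prop:shortestquadrant}, $P$ lies inside $\overline{Q_{uv}}$, so the last edge of $P$ lies on $l_3$ or $l_4$; without loss of generality, suppose the penultimate vertex $a$ lies on $l_3$. Then $a$ lies on the unique ray of $l_3$ at $v$ bounding $Q_u$ (the pseudoquadrant at $v$ containing $u$). Let $a'$ denote the other neighbor of $v$ on $l_3$, lying on the opposite ray.

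The key lemma to establish is that $a'$ is also eccentric. Suppose, for contradiction, that $d(u,a')=e(u)-1$. Then appending the edge $a'v$ to a shortest $u,a'$-path produces a $u,v$-path of length $e(u)$, hence a shortest $u,v$-path whose penultimate vertex is $a'$. Applying Proposition~\ref{prop:shortestquadrant} to this new shortest path forces $a'\in\overline{Q_u}$, so the ray of $l_3$ at $v$ through $a'$ must bound $Q_u$. But this ray is opposite the $a$-ray, and a pseudoquadrant at $v$ is bounded by a unique ray of $l_3$, already taken by the $a$-ray — contradiction. Hence $d(u,a')=e(u)$.

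With the key lemma in hand, I iterate: replace $v$ by $a'$ and repeat, producing a sequence $v=v_0,v_1=a',v_2,\ldots$ of eccentric vertices on $l_3$, each one step farther from $a$ than the last. Since $l_3$ has only finitely many crossings, the walk must terminate at the endpoint of $l_3$ on that side; this endpoint has degree at most 3 (being an endpoint of a pseudoline) and is eccentric by the iteration, giving the desired vertex.

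The main technical hurdle is the inductive step. At an intermediate 4-vertex $v_i$ on this walk, to reapply the key lemma one needs some shortest $u,v_i$-path whose last edge uses the $v_{i-1}$-ray of $l_3$, so that the bounding ray of $l_3$ at $v_i$'s pseudoquadrant containing $u$ is the $v_{i-1}$-ray and the pseudoquadrant argument pushes the eccentricity out to $v_{i+1}$. Verifying this reduces to tracking how the pseudoquadrant at $v_i$ containing $u$ evolves with $i$: the pseudohalfplane of $l_3$ containing $u$ is invariant along the walk, only the second pseudoline $l'_i$ at $v_i$ changes, and a careful propagation argument should pin the bounding $l_3$-ray at each $v_i$ as the $v_{i-1}$-ray throughout the walk.
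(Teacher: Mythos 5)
Your key lemma is correct, and proving it via Proposition~\ref{prop:shortestquadrant} is a clean alternative to the paper's Observation~\ref{obs:1ecc} (which reaches the same conclusion: both neighbours of $v$ lying on the rays that do not bound $Q_u$ are eccentric). The gap is in the iteration. To step from $a'$ to the next vertex of $l_3$ you need the ray of $l_3$ at $a'$ pointing back towards $v$ to bound the pseudoquadrant at $a'$ containing $u$, i.e.\ you need $l'$ not to separate $u$ from $v$. That propagation claim, which you flag as the remaining hurdle, is false. Take the six lines $l_1\colon y=2x+10$, $l_2\colon y=-2x+10$, $l_3\colon y=0$, $l_4\colon y=x$, $l_5\colon y=-5x+5$, $l_6\colon y=-6x+8$, and $u=l_1\cap l_2=(0,10)$. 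Then $e(u)=4$; the vertex $v=l_3\cap l_4=(0,0)$ is an eccentric $4$-vertex; a shortest $u,v$-path arrives at $v$ along the negative $x$-axis through $a=l_1\cap l_3$, so $a'=l_3\cap l_5=(1,0)$, which is a $4$-vertex with $d(u,a')=4$. But $l_5$ separates $u$ from $v$, the bounding $l_3$-ray at $a'$ points \emph{away} from $v$, and the next vertex $l_3\cap l_6=(4/3,0)$ has distance $3$ from $u$. The walk along $l_3$ dies at $a'$; worse, the far endpoint $l_2\cap l_3=(5,0)$ of $l_3$ has distance $2$ from $u$, so the vertex you intend to terminate at is not even eccentric.

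Once you allow the walk to turn onto the second pseudoline at such a vertex (here it would go to $l_5\cap l_6$, a $2$-vertex, and succeed), your termination argument disappears: ``reach the end of $l_3$'' no longer applies, and the key lemma alone does not prevent the sequence of eccentric $4$-vertices from returning to earlier vertices. This is exactly the point where the paper does extra work: it passes to an isomorphic polyline (wiring-diagram) drawing in which every angle at every crossing is at most $\pi$, shows that at least one of the two far neighbours is strictly farther from $u$ in Euclidean distance (Observation~\ref{obs:2ecc}), and uses that strictly increasing potential to force the zig-zagging walk to terminate at a $2$- or $3$-vertex. You need some substitute for that potential; a walk confined to a single pseudoline cannot provide one.
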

\begin{proof}
Let $v$ be an eccentric vertex of $u$, and let $d(u,v)=d$. 
Suppose $v$ is a $4$--vertex such that $v= l_3\cap l_4$. 
Then $u$ cannot be on $l_3$ or $l_4$, else one of the end vertices of $l_3$ or $l_4$ is farther from $u$ than $v$; a contradiction. 
		
Pseudolines $l_3$ and $l_4$ divide the plane into four pseudoquadrants, which we denote as $Q_u$, $Q_{uc}$, $Q'_u$ and $Q'_{uc}$ taken in a clockwise sense, such that $Q_u$ contains $u$ (see Figure \ref{fig19}). 
Let $x_{l_3}$ and $x_{l_4}$ be the neighbors of vertex $v$ on $l_3$ and $l_4$, respectively, in $Q'_u$, that is, $x_{l_3}\in Q'_u \cap Q_{uc}$ and $x_{l_4}\in Q'_u \cap Q'_{uc}$. 
So $l_3$ separates vertices $u$ and $x_{l_4}$, and $l_4$ separates vertices $u$ and $x_{l_3}$. 
		
\smallskip		
We prove the following observations. 
\begin{observation}\label{obs:1ecc}
	$d(u,x_{l_4})=d$ and $d(u,x_{l_3})=d$.
\end{observation}
\begin{proof}[of Observation \ref{obs:1ecc}]
We shall only prove the first equality; similarly, we can prove the second equality. 
Since $e(u)=d$, it follows that $d(u,x_{l_4})\leq d$. 
If $d(u,x_{l_4})<d$, then $d(u,x_{l_4})=d-1$.
Indeed, else if $d(u,x_{l_4}) < d-1$, then $d(u,v)_{\text{via } x_{l_4}}<d$; a contradiction. 
			
Let $P$ be a shortest $u,x_{l_4}$ path. 
Choose vertex $y\in l_3\cap P$ that is nearest to $u$, that is, $d(u,y)$ is minimum. 
The path $P$ does not contain $v$, else $d(u,x_{l_4}) > d$; a contradiction. 
			
Proposition \ref{prop:shortest} implies that $d(y,v)_{l_3} < d(y,v)_{\text{via $x_{l_4}$}}$, that is, $d(y,v)_{l_3} \leq d(y,x_{l_4})$. 
This implies $d(u,v)_{\text{via $y$}} \leq d-1$; and thus contradicts $d(u,v)=d$. 
Hence $d(u,x_{l_4})=d$. 
\end{proof}    
		
\begin{figure}
	\centering
	
	\begin{tikzpicture}[scale=0.7]
		\draw [solid] plot [smooth] coordinates{(0,0) (1,0.4)  (3,0) (5,-0.4) (7,0)};
		\draw [solid] plot [smooth] coordinates{(1,2) (2,1.2)  (3,0) (4,-1.2) (5,-2)};
		\draw [dashed] (2,-0.6) to[out=0,in=180+20] (2.7,-0.6) to [out=30,in=180+70] (3.2,-0.2) to[out=70,in=180+70] (3.3,0);

		\draw[fill] (3,0) circle (0.8pt);
		\draw[fill] (3.3,-0.07) circle (0.8pt);
		\draw[fill] (2.8,0.25) circle (0.8pt);
		\draw[fill] (2,-0.6) circle (0.8pt);
		\draw[fill] (3.2,-0.23) circle (0.8pt);
			
		\node at (7.2,0) {\footnotesize $l_4$};
		\node at (5.2,-2) {\footnotesize $l_3$};
		\node at (2.9,-0.2) {\footnotesize $v$};
		\node at (3.55,0.12) {\footnotesize $x_{l_4}$};
		\node at (2.92,0.6) {\footnotesize $x_{l_3}$};
		\node at (3.22,-0.5) {\footnotesize $y$};
		\node at (1.9,-0.7) {\footnotesize $u$};
		\node at (2,-1.5) {\footnotesize $Q_u$};
		\node at (4,1) {\footnotesize $Q'_u$};
		\node at (1,1.2) {\footnotesize $Q_{uc}$};
		\node at (5,-1.2) {\footnotesize $Q'_{uc}$};
	\end{tikzpicture}              
	\caption{Illustration for Proposition~\ref{prop:eccentric}}
	\label{fig19}
			
\end{figure}
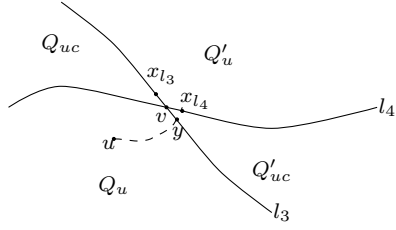

Before proceeding further, we modify the pseudoline arrangement to an \emph{isomorphic} one, where each pseudoline is a polyline with line segments between adjacent intersection points, and for every intersection point, each of the four angles between the two intersecting pseudolines (polylines)\footnote{One of the angles may be greater than $\pi$ if the point of intersection of the two pseudolines (polylines) is a point of non-differentiability for both of them.} is at most $\pi$. 
Such an arrangement can be obtained by considering the wiring diagram of the psuedoline arrangement and replacing the wires by line segments joining adjacent vertices. 
To see this, treat an intersection point in the wiring diagram as the origin of the co-ordinate axis system. 
Then each of its four quadrants has either an adjacent intersection point or the starting/ending point of one of the two pseudolines (if the origin is an end vertex). 
		
In such an arrangement we have the following observation.     
Let $\|p_1p_2\|$ denote the Euclidean distance between points $p_1$ and $p_2$. 
		
\begin{observation}\label{obs:2ecc}
	At least one of $\|ux_{l_3}\|$ or $\|ux_{l_4}\|$ is greater than $\|uv\|$. 
\end{observation}
\begin{proof}[of Observation \ref{obs:2ecc}]
If both $\|ux_{l_3}\|\leq \|uv\|$ and $\|ux_{l_4}\|\leq \|uv\|$, then $\angle x_{l_3}vx_{l_4}>\pi$. 
This contradicts our chosen arrangement. 
\end{proof}
		
Observations~\ref{obs:1ecc} and \ref{obs:2ecc} imply the existence of a neighbor of $v$, say $x(=x_{l_3}$ or $x_{l_4})$, which is also an eccentric vertex of $u$ such that $\|ux\|>\|uv\|$. 
If $x$ is a $4$--vertex, then we rename $x$ as the new $v$ and update the quadrants. 
		
By repeating the above arguments, we get a sequence of distinct vertices $x=x_1,x_2,\ldots$, such that $d(u,x_i)=d$ and $\|ux_i\| > \|ux_j\|$ for $i>j$. 
Since $|V(G_{L})|$ is finite, the last vertex of the sequence is not a $4$--vertex. 
Thus there exists an eccentric vertex of $u$ that is a $2$--vertex or $3$--vertex. 
\end{proof}

We immediately have the following corollary. 
\begin{corollary}
	For a vertex $u \in V(G_{L})$, there exists an eccentric vertex of $u$ that lies in the outer face of $R(G_{L})$. 
\end{corollary}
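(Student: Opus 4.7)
The plan is to derive this corollary as an immediate consequence of Proposition~\ref{prop:eccentric}, so the work reduces to verifying one simple geometric fact about $R(G_L)$: every vertex of degree $2$ or $3$ lies on the outer face.

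First I would invoke Proposition~\ref{prop:eccentric} to obtain an eccentric vertex $w$ of $u$ whose degree in $G_L$ is either $2$ or $3$. It then remains to argue that such a $w$ must lie on the outer face of $R(G_L)$. Recall from the introduction that $R(G_L)$ is obtained from the arrangement $\mathcal{A}(L)$ by deleting the two infinite segments of every pseudoline. Consequently, an intersection point $p = l \cap l'$ has degree $4$ in $G_L$ exactly when both $l$ and $l'$ continue past $p$ on both sides inside $R(G_L)$, i.e.\ when $p$ is an interior crossing of the drawing. If instead $p$ has degree $3$, then one of the two pseudolines through $p$ terminates at $p$ in $R(G_L)$; if $p$ has degree $2$, then both pseudolines terminate at $p$. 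In either case $p$ sits at the end of a pseudoline span, so a sufficiently small neighborhood of $p$ contains points belonging to no bounded face of the arrangement drawing. Hence $p$ is incident to the unbounded (outer) face.

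Applying this observation to $w$ yields that $w$ lies on the outer face of $R(G_L)$, completing the proof. The only step that could be called a ``main obstacle'' is really just making the endpoint/outer-face correspondence rigorous; this is transparent from the definition of $R(G_L)$ (deletion of the two infinite tails of each pseudoline) but is worth stating explicitly, perhaps as a short lemma or a single sentence, since it is exactly the translation between the combinatorial notion of vertex degree and the geometric notion of lying on the outer face of the realization.
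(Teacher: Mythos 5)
Your proposal is correct and matches the paper's intent exactly: the paper derives this corollary immediately from Proposition~\ref{prop:eccentric}, with the unstated step being precisely your observation that $2$-- and $3$--vertices are endpoints of pseudoline spans and hence incident to the outer face of $R(G_L)$. Your explicit justification of that endpoint/outer-face correspondence is a reasonable way to make the ``immediate'' implication rigorous.
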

	
Next, we present the main results of this section. 
	
\subsection{Diameter and Characterization of Vertices with Maximum Eccentricity}\label{subsec:diameter and radius}
Finding the diameter of a pseudoline arrangement graph is a straightforward implication of Proposition~\ref{prop:number of lines in shortest path}. 
However, one can also prove it without using Proposition~\ref{prop:number of lines in shortest path}. 
For the sake of the reader, we restate Proposition~\ref{th:diameter}. 
	
\medskip

\noindent\textbf{Proposition 1.} \textit{The diameter of a pseudoline arrangement graph on $n$ lines is $n-2$.}

\begin{proof}[of Proposition \ref{th:diameter}]
\sloppy For vertices $u$ and $v$, Proposition~\ref{prop:number of lines in shortest path} implies that ${d(u,v)\leq n-2}$. 
The equality holds if $u$ and $v$ are the end vertices on a pseudoline. 
Hence diameter of a pseudoline arrangement graph realized on $n$ pseudolines is $n-2$. 
\end{proof}
	
In the above context, Proposition~\ref{prop:number of lines in shortest path} implies the following remark.
\begin{remark}\label{re:n-2path}
	If $d(u,v)= n-2-i$, then every shortest $uv$-path has vertices on $n-i$ pseudolines. 
	In particular, if $d(u,v)=n-2$, then any shortest $uv$-path has vertices on all the $n$ pseudolines. 
\end{remark}
	
For vertices $u$ and $v$, any shortest $u,v$-path has a vertex on every pseudoline that separates $u$ and $v$. 
Therefore, the number of such separating pseudolines lower bounds $d(u,v)$. 

\begin{remark}\label{re:sep}
	For vertices $u$ and $v$ in a pseudoline arrangement graph,  
		\[d(u,v) \geq 
		\begin{cases}
		\# \text{separating pseudolines} + 1, & \text{if $u$ and $v$ lie on the same pseudoline;}\\
		\# \text{separating pseudolines} + 2, & \text{if $u$ and $v$ do not lie on the same pseudoline.}    
		\end{cases}
		\]
\end{remark}
	
Theorem~\ref{th:outerlayer} characterizes the diametrical vertices of a pseudoline arrangement graph, that is, vertices with eccentricity equal to the graph diameter. 
For the sake of the reader, we restate Theorem~\ref{th:outerlayer}. 
	
\medskip
\noindent\textbf{Theorem 2.}
\textit{A vertex $v$ in a pseudoline arrangement graph $G$ is a diametrical vertex if and only if $v$ lies in the outer face of its realization $R(G)$. }
    
\subsubsection{Proof of Theorem~\ref{th:outerlayer}}\label{proof:outerlayer}
For the proof of sufficiency, if $u$ is a vertex in the outermost layer of a pseudoline arrangement realization, then it belongs to one of the unbounded regions $f$ in the arrangement. 
Let $v$ be a vertex of the unbounded region $f^*$ such that $f^*$ is separated from $f$ by every pseudoline. 
By Remark~\ref{re:sep} and Proposition~\ref{th:diameter}, $d(u,v)=n-2$. 
Hence $u$ is a diametrical vertex. 
	
\smallskip
For the proof of necessity, it suffices to show that if the vertex $u$ does not lie in the outermost layer, then its eccentricity is strictly less than $n-2$. 
Since $u$ is not on the outermost layer, there is a triplet of pseudolines $(l_1, l_2,l_3)$ such that $u$ lies in the pseudotriangle $T$ formed by them. 
Now consider any vertex $v$ in the arrangement. 
	
Depending on where $v$ lies with respect to $T$, we have the following four cases: (1) $v$ lies in the unbounded region of the induced arrangement on pseudolines $l_1, l_2,l_3$, that has all three of them in its boundary; (2) $v$ lies in the unbounded region of the induced arrangement on pseudolines $l_1, l_2,l_3$, that has exactly two of them in its boundary; (3) $v$ lies inside $T$; and (4) $v$ lies on the pseudolines $l_1, l_2,l_3$. 
(Using Proposition~\ref{prop:eccentric}, we can avoid case (3), however, it does not change the complexity of the proof.) 
We have to show that $d(u,v)<n-2$. 
	
	
\begin{figure}
	\centering
	\begin{minipage}{0.58\textwidth}
		\centering
		\begin{tikzpicture}[scale=0.75]
		
		\draw [dashed] (0,0) -- (0,4.5);
		\draw [dashed] (-1,0.5) -- (3,2.48);
		\draw [solid] (0,1) to[out=30,in=180+70] (0.7,2);
		\draw [solid] (0.7,2) to[out=70,in=180+70] (0.5,3.15);
		\draw [solid] (0.6,3.4) to[out=60,in=180+40] (2.5,3.16);
		\draw [solid] (2.7,3.38) to[out=70,in=180+70] (3.25,5.5);
		
		\draw [solid] plot [smooth] coordinates {(0,5) (1.8,3.8) (4,3.3)}; 
			
		\draw [dashed] (-0.5,4) -- (2.5,5.2) -- (4,5.8);
		\draw [dashed] (2.8,1) -- (3.3,6);
			
		\draw [solid] plot [smooth] coordinates{(-1.3,-1) (-0.35,0.7) (0.5,3.15) (0.6,3.4) (0.88,4.28) (1.4,5) }; 
		\draw [solid] plot [smooth] coordinates {(-1.4,-0.8) (0.55,1) (2.5,3.16) (2.7,3.38) (3.3,4) (4,4.4)}; 
			
		\draw[fill] (0,1) circle (1pt);
		\draw[fill] (3.25,5.5) circle (1pt);
		\draw[fill] (0.5,3.15) circle (1pt);
		\draw[fill] (0.6,3.4) circle (1pt);
		\draw[fill] (2.5,3.16) circle (1pt);
		\draw[fill] (2.7,3.38) circle (1pt);

		\node at (-0.2,4.4) {\footnotesize $l$};
		\node at (2.4,2) {\footnotesize $l'$};
		\node at (-0.2,5.2) {\footnotesize $l_3$};
		\node at (-1.1,-1) {\footnotesize $l_2$};
		\node at (-1.4,-0.5) {\footnotesize $l_1$};
		\node at (3.4,5.7) {\footnotesize $v$};
		\node at (-0.2,0.8) {\footnotesize $u$};
		\node at (0.35,3.25) {\footnotesize $p_0$};
		\node at (0.5,3.55) {\footnotesize $p$};
		\node at (2.6,3) {\footnotesize $q_0$};
		\node at (2.85,3.3) {\footnotesize $q$};
		\end{tikzpicture}
	    \caption{$v$ lies in the unbounded 3-face}
	    \label{fig:3face}
    \end{minipage}
    \begin{minipage}{0.4\textwidth}
		\centering
		\begin{tikzpicture}[scale=0.75]
			
		\draw [dashed] (0,0) -- (0,4.5);
		\draw [dashed] (-1,0.5) -- (3.8,2.8);
		\draw [solid] (0,1) to[out=30,in=180+70] (0.7,2);
		\draw [solid] (0.7,2) to[out=70,in=180+70] (0.5,3.15);
		\draw [solid] (0.6,3.4) to[out=60,in=180+40] (2.5,3.16);
		\draw [solid] (2.5,3.16)  to[out=20,in=180+40]  (4,3.7);
			
		\draw [solid] plot [smooth] coordinates {(0,5) (1.8,3.8) (4,3.3)}; 
			
		\draw [dashed] (-0.5,4) -- (2.5,5.2) -- (4,5.8);
		\draw [dashed] (2.8,1) -- (3.3,6);
			
		\draw [solid] plot [smooth] coordinates{(-1.3,-1) (-0.35,0.7) (0.5,3.15) (0.6,3.4) (0.88,4.28) (1.4,5) }; 
		\draw [solid] plot [smooth] coordinates {(-1.4,-0.8) (0.55,1) (2.5,3.16) (2.7,3.38) (3.3,4) (4,4.4)}; 
			
		\draw[fill] (4,3.7) circle (1pt);
		\draw[fill] (0,1) circle (1pt);
		\draw[fill] (0.5,3.15) circle (1pt);
		\draw[fill] (0.6,3.4) circle (1pt);
		\draw[fill] (2.5,3.16) circle (1pt);
		\draw[fill] (2.7,6) circle (0pt);

		\node at (-0.2,4.4) {\footnotesize $l$};
		\node at (2.4,2) {\footnotesize $l'$};
		\node at (-0.2,5.2) {\footnotesize $l_3$};
		\node at (-1.1,-1) {\footnotesize $l_2$};
		\node at (-1.4,-0.5) {\footnotesize $l_1$};
		\node at (4,3.5) {\footnotesize $v$};
		\node at (-0.2,0.8) {\footnotesize $u$};
		\node at (0.35,3.25) {\footnotesize $p_0$};
		\node at (0.5,3.55) {\footnotesize $p$};
		\node at (2.6,3) {\footnotesize $q_0$};
		\end{tikzpicture}
		\caption{$v$ lies in the unbounded 2-face}
		\label{fig:2face}
	\end{minipage}
\end{figure}

\smallskip
For the sake of contradiction, suppose $d(u,v)=n-2$. 
From Proposition~\ref{prop:shortestquadrant}, any shortest $u,v$-path is contained completely in the pseudo-4-gon $Q_{uv}$. 
Let $P$ be such a shortest $u,v$-path. 
From Proposition~\ref{prop:number of lines in shortest path}, for any pseudoline $l$, we have $l\cap Q_{uv}\neq \emptyset$ and $l$ contains at least a vertex of $P$. 
This adds more restrictions on the possible configurations for path $P$. 
Two possible scenarios (for cases (1) and (2)) are depicted in Figures~\ref{fig:3face} and \ref{fig:2face}, respectively. 
Note that, in Figure~\ref{fig:3face}, $p_0\neq p$ and $q_0\neq q$, and in Figure~\ref{fig:2face}, $p_0\neq p$, else three pseudolines meet at a point. 
Moreover the nature of path $q_0v|_P$ might vary, depending on the location of $v$ (the four cases); but it does not affect our proof.  
In all possible scenarios that are not contradicted by Proposition~\ref{prop:shortest}, the common structure is the path $uq_0|_P$ (see Figure~\ref{fig:common}). 
We have not highlighted the last two cases, where also, respecting Proposition~\ref{prop:shortestquadrant} and \ref{prop:number of lines in shortest path}, $uq_0|_P$ is the common structure. 
Thus it suffices to restrict our attention to this common path.

\begin{figure}[h]
	\centering
	\begin{minipage}{0.58\textwidth}
		\centering
		\begin{tikzpicture}[scale=0.75]
		
		\draw [dashed] (0,0) -- (0,4.5);
		\draw [dashed] (-1,0.5) -- (3,2.48);
		\draw [solid] (0,1) to[out=30,in=180+70] (0.7,2);
		\draw [solid] (0.7,2) to[out=70,in=180+70] (0.5,3.15);
		\draw [solid] (0.6,3.4) to[out=60,in=180+40] (2.5,3.16);
			
			
			
		\draw [solid] plot [smooth] coordinates{(-1.3,-1) (-0.35,0.7) (0.5,3.15) (0.6,3.4) (0.88,4.28) (1.4,5) }; 
		\draw [solid] plot [smooth] coordinates {(-1.4,-0.8) (0.55,1) (2.5,3.16) (2.7,3.38) (3.3,4) (4,4.4)}; 
			
		\draw[fill] (0,1) circle (1pt);
		\draw[fill] (0.5,3.15) circle (1pt);
		\draw[fill] (0.6,3.4) circle (1pt);
		\draw[fill] (2.5,3.16) circle (1pt);

		\node at (-0.2,4.4) {\footnotesize $l$};
		\node at (2.4,2) {\footnotesize $l'$};
		\node at (-1.1,-1) {\footnotesize $l_2$};
		\node at (-1.4,-0.5) {\footnotesize $l_1$};
		\node at (-0.2,0.8) {\footnotesize $u$};
		\node at (0.35,3.25) {\footnotesize $p_0$};
		\node at (0.5,3.55) {\footnotesize $p$};
		\node at (2.6,3) {\footnotesize $q_0$};
	\end{tikzpicture}
	\caption{Common structure in all the cases.}
	\label{fig:common}
	\end{minipage}
	\begin{minipage}{0.4\textwidth}
		\centering
		\begin{tikzpicture}[scale=0.75]
			
		\draw [dashed] (0,0) -- (0,4.5);
		\draw [dashed] (-1,0.5) -- (3,2.48);
		\draw [solid] (0,1) to[out=30,in=180+70] (0.7,2);
		\draw [solid] (0.7,2) to[out=70,in=180+70] (0.5,3.15);
		\draw [solid] (0.6,3.4) to[out=60,in=180+40] (2.5,3.16);
			
			
			
		\draw [solid] plot [smooth] coordinates{(-1.3,-1) (-0.35,0.7) (0.5,3.15) (0.6,3.4) (0.88,4.28) (1.4,5) }; 
		\draw [solid] plot [smooth] coordinates {(-1.4,-0.8) (0.55,1) (2.5,3.16) (2.7,3.38) (3.3,4) (4,4.4)}; 
			
		\draw[fill] (0,1) circle (1pt);
		\draw[fill] (0.5,3.15) circle (1pt);
		\draw[fill] (0.6,3.4) circle (1pt);
		\draw[fill] (2.5,3.16) circle (1pt);
		\draw[fill] (0.15,1.08) circle (1pt);
		\draw[fill] (1.05,1.53) circle (1pt);        
			
		\node at (-0.2,4.4) {\footnotesize $l$};
		\node at (2.4,2) {\footnotesize $l'$};
		\node at (-1.1,-1) {\footnotesize $l_2$};
		\node at (-1.4,-0.5) {\footnotesize $l_1$};
		\node at (-0.2,0.8) {\footnotesize $u$};
		\node at (0.35,3.25) {\footnotesize $p_0$};
		\node at (0.5,3.55) {\footnotesize $p$};
		\node at (2.6,3) {\footnotesize $q_0$};
		\node at (1.05,1.3) {\footnotesize $b$};
		\node at (0.17,0.91) {\footnotesize $f$};
	\end{tikzpicture}
	\caption{The closed polygon $fq_0|_{P_2}\cup fq_0|_{P_3}$}
	\label{fig:polygon}
	\end{minipage}
\end{figure}

\smallskip
Next we compare between two $u,q_0$-paths: the first path $P_1=P|_{uq_0}$ and the second path $P_2$ lies on pseudolines $l'$ and $l_1$. 
Let $b=l' \cap l_1$.    
Choose the vertex $f \in P_1 \cap P_2$ to be the farthest vertex from $u$ (that is, $d(u,f)$ is maximum) such that the $u,f$-path lies in $P_1\cap P_2$ (see Figure~\ref{fig:polygon}). Note that $f$ may be $u$. 
	
Vertex $f$ lies on $l'$. Indeed, if $f$ lies on $l_1$, then by Proposition~\ref{prop:shortest}, $P$ cannot be a shortest $u,v$-path; a contradiction. 
At this stage, we update the pseudolines $l_1$ and $l_2$ such that both $l\cap l_2$ and $l'\cap l_1$ are nearest to $u$, that is, $d(u,l\cap l_2)$ and $d(u,l'\cap l_1)$ is minimum. 
Thus none of the pseudolines intersect both $fb|_{l'}$ and $bq_0|_{l_1}$, else the choice of $l_1$ is contradicted.

\smallskip
Consider the closed pseudopolygon $fq_0|_{P_1}\cup fq_0|_{P_2}$, where $fq_0|_{P_2} = fb|_ {l'} \cup bq_0|_{l_1}$.  
Let the vertices on $fq_0|_{P_2}$ be the following $fx_0\ldots x_sbx_{s+1}\ldots x_tq_0$ such that $x_0\ldots x_s$ occur consecutively on $l'$ and $x_{s+1}\ldots x_t$ occur consecutively on $l_1$. 
Consider vertices $x_i$, for $0\leq i \leq t$, in an increasing order. 
For each $x_i$, with $0\leq i \leq s$, let $l_{x_i}$ denote the pseudoline such that $x_i = l_{x_i}\cap l'$; and for each $x_i$, with $s+1 \leq i \leq t$, let $l_{x_i}$ denote the pseudoline such that $x_i = l_{x_i}\cap l_1$.

The pseudoline $l_{x_i}$ also intersects $fq_0|_{P_1}$. 
Indeed, observe that $fq_0|_{P_1}\cup fq_0|_{P_2}$ is a closed pseudopolygon and there are no pseudolines that intersect both $fb|_{l'}$ and $bq_0|_{l_1}$ (because of our choice of $l_1$). 
And none of the pseudolines intersect $fb|_{l'}$ (respectively, $bq_0|_{l_1}$) twice as $l'$ (respectively, $l_1$) is a pseudoline. Hence every line intersecting $fq_0|_{P_2}$ also intersects $fq_0|_{P_1}$. 
	
Define the \textit{corresponding vertex} of $x_i$ in $fq_0|_{P_1}$ to be the vertex on $l_{x_i} \cap fq_0|_{P_1}$ that is closest to $x_i$. 
We claim that a vertex, say $t$, in $fq_0|_{P_1}$ can be the corresponding vertex of at most one $x_i$. 
Suppose a vertex $t$ in $fq_0|_{P_1}$ is the corresponding vertex of both $x_m$ and $x_n$. Then consider the vertex, say $s$, preceding $t$ in $fq_0|_{P_1}$, that is, $st$ is an edge in $fq_0|_{P_1}$.  
As $t$ is the corresponding vertex of both $x_m$ and $x_n$, $s$ does not lie on $l_{x_m}$ and $l_{x_n}$. 
Thus three pseudolines intersect at $t$, contradicting our assumption of a simple pseudoline arrangement. 
	
Next, we have the following observation. 
	
\begin{observation}\label{obs:corresponding vertex}
	Vertex $p_0$ is not a corresponding vertex of some $x_i$ in $fq_0|_{P_2}$. 
\end{observation}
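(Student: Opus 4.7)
The plan is to argue by contradiction: suppose $p_0$ is the corresponding vertex of some $x_i \in fq_0|_{P_2}$, so that $p_0 \in l_{x_i}$ and, among the vertices of $l_{x_i} \cap fq_0|_{P_1}$, $p_0$ is the one closest to $x_i$ along $l_{x_i}$. First I would pin down $l_{x_i}$: since $p_0$ is an interior vertex of the shortest path $P_1$, it sits at the intersection of exactly two pseudolines, namely $\ell^{\star}$, the pseudoline carrying the edge $p_0 p$, and $\ell^{\star\star}$, the pseudoline carrying the edge of $P_1$ immediately preceding $p_0$. These are distinct because Proposition~\ref{prop:number of lines in shortest path} prohibits two consecutive edges of a shortest path from lying on the same pseudoline, so $l_{x_i} \in \{\ell^{\star},\ell^{\star\star}\}$.

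I would then rule out both possibilities. In the case $l_{x_i}=\ell^{\star}$, the vertex $p$ also lies on $l_{x_i} \cap fq_0|_{P_1}$ and is adjacent to $p_0$ along $l_{x_i}$. Applying a parity argument to the closed pseudopolygon $fq_0|_{P_1} \cup fq_0|_{P_2}$ (whose boundary $l_{x_i}$ crosses exactly once on the $P_2$-side at $x_i$, by the earlier argument using the updated choice of $l_1$), the $P_1$-side crossing adjacent to $x_i$ along $l_{x_i}$ is forced to be exactly one of $\{p, p_0\}$, and a short topological check shows that it must be $p$. This places $p$ strictly between $x_i$ and $p_0$ along $l_{x_i}$, contradicting the minimality in the definition of $p_0$. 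The case $l_{x_i}=\ell^{\star\star}$ is handled symmetrically, using the vertex of $P_1$ immediately preceding $p_0$ in place of $p$; the maximality in the definition of $f$ ensures that this predecessor still lies in $fq_0|_{P_1}$.

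The principal obstacle is the planar-topology step in each case, namely, verifying that the neighbor of $p_0$ on $l_{x_i}$ really does lie strictly between $x_i$ and $p_0$ along $l_{x_i}$, and hence is a strictly better candidate for the corresponding vertex of $x_i$. For this I would lean on the structural facts established just before the observation: that every pseudoline meeting $fq_0|_{P_2}$ also meets $fq_0|_{P_1}$, and that the updated choice of $l_1, l_2$ near $u$ rules out secondary crossings of $l_{x_i}$ with $fb|_{l'} \cup bq_0|_{l_1}$. Together these fix the orientation of $l_{x_i}$ across the pseudopolygon and force the desired contradiction.
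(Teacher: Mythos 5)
Your overall plan --- localize $l_{x_i}$ to one of the two pseudolines through $p_0$ and then contradict the minimality in the definition of the corresponding vertex --- is in the spirit of the paper's argument, but it rests on a false premise. You claim that Proposition~\ref{prop:number of lines in shortest path} ``prohibits two consecutive edges of a shortest path from lying on the same pseudoline.'' It does not: by Proposition~\ref{prop:shortest}, the shortest path between two vertices on a common pseudoline runs entirely along that pseudoline, so a shortest path can perfectly well contain runs of consecutive collinear edges while still meeting $k+2$ pseudolines (the carrying pseudoline plus the $k+1$ transversals crossed along the way). Consequently your identification of the two pseudolines through $p_0$ with $\ell^{\star}$ and $\ell^{\star\star}$ is unjustified: if the two edges of $P_1$ at $p_0$ lie on one pseudoline, the second pseudoline through $p_0$ carries neither incident edge, $l_{x_i}$ could be that transversal, and neither of your two cases applies --- there is then no neighbour of $p_0$ on $P_1$ lying on $l_{x_i}$ with which to contradict minimality. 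What actually pins down the two pseudolines through $p_0$ is the explicit construction of $p_0$ and $p$ as crossings of the named pseudolines in Figures~\ref{fig:3face}--\ref{fig:common}, not a general property of shortest paths.

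Beyond that, the step you yourself flag as ``the principal obstacle'' --- that the relevant neighbour of $p_0$ lies strictly between $x_i$ and $p_0$ along $l_{x_i}$, i.e.\ that the first crossing met when travelling from $x_i$ is $p$ rather than $p_0$ --- is precisely where the content of the observation lives, and you defer it to an unspecified parity argument and ``a short topological check.'' As written this is a plan, not a proof, and the directional claim is not obviously true. The paper's own two-line argument sidesteps the direction question for the edge $p_0p$ entirely: it rules out $l_{x_k}$ being the pseudoline of the \emph{other} edge at $p_0$ (since otherwise the corresponding vertex of $x_k$ would precede $p_0$), and then concludes that three pseudolines would have to meet at $p_0$, contradicting simplicity of the arrangement. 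To complete your version you would need either to supply the omitted topological check in the case $l_{x_i}=\ell^{\star}$, or to switch to the paper's endgame by arguing that $l_{x_i}$ cannot coincide with the pseudoline carrying $p_0p$, so that a third pseudoline through $p_0$ is forced.
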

\begin{proof}[of Observation \ref{obs:corresponding vertex}]
Suppose $p_0$ is a corresponding vertex of some $x_k$ in $fq_0|_{P_2}$. 
Then the edge of $fq_0|_{P_1}$, other that $p_0p$, that ends at $p_0$ cannot be on $l_k$, else by our definition the corresponding vertex of $x_k$ comes before $p_0$. 
It leads to a contradiction as three pseudolines intersect at $p_0$. 
\end{proof}
	
Observation~\ref{obs:corresponding vertex} implies that $d(u,q_0)|_{P_2} \leq d(u,q_0)|_{P_1}$. 
Thus $P_2 \cup q_0v|_{P}$ is a shortest $u,v$-path. 
But the pseudoline $l_2$ does not intersect it; so, by Remark~\ref{re:n-2path}, $d(u,v)<n-2$. 
This completes the proof of the necessity of Theorem~\ref{th:outerlayer} and hence the proof of Theorem~\ref{th:outerlayer}. \hfill $\square$
	
\medskip
As a direct consequence of Theorem~\ref{th:outerlayer}, we can also find the eccentricities of some vertices in the next layer.
Let $G_{L}$ be a pseudoline arrangement graph having vertices $V_{out}\subset V(G_L)$ on the outer face of its realization $R(G_L)$. 
The \emph{$1$--layer} of $R(G_{L})$ is the outer face of the realization upon removing all vertices in $V_{out}$ and their incident edges. 

Theorem \ref{th:outerlayer} implies that any interior vertex has eccentricity less than $n-2$. 
Notice that it is possible for vertices in the $1$--layer of $R(G_{L})$ to have no neighbors on the outer face. 
However, for vertices in the $1$--layer which have a neighbor on the outer face, we have the following corollary. 
	
\begin{corollary}
	Let $u_1$ be a vertex in the $1$--layer of $R(G_{L})$. If $u_1$ has a neighbor $u$ in the outer face of $R(G_{L})$, then $e(u_1)=n-3$.
\end{corollary}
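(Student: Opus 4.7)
The plan is to sandwich $e(u_1)$ between $n-3$ and $n-3$, with Theorem~\ref{th:outerlayer} giving the upper bound and the triangle inequality (together with the same theorem applied to $u$) giving the lower bound.

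For the upper bound, I would note that being in the $1$-layer means $u_1$ is not on the outer face of $R(G_L)$. Theorem~\ref{th:outerlayer} then rules out $u_1$ being diametrical, so its eccentricity is strictly less than the diameter $n-2$; since eccentricities are integers, this gives $e(u_1) \leq n-3$.

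For the lower bound, the neighbor $u$ of $u_1$ lies on the outer face, so Theorem~\ref{th:outerlayer} combined with Proposition~\ref{th:diameter} gives $e(u) = n-2$. Pick any eccentric vertex $w$ of $u$; since $u_1u$ is an edge, the triangle inequality yields
\[
d(u_1, w) \;\geq\; d(u, w) - d(u_1, u) \;=\; (n-2) - 1 \;=\; n-3.
\]
Hence $e(u_1) \geq n-3$, and combining the two bounds gives $e(u_1) = n-3$. I do not foresee any technical obstacle: the corollary falls out immediately from the characterization in Theorem~\ref{th:outerlayer} together with the elementary fact that eccentricities of adjacent vertices differ by at most one.
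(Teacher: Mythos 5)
Your proof is correct and follows essentially the same route as the paper: the lower bound $e(u_1)\geq n-3$ comes from the triangle inequality applied to an eccentric vertex of the outer-face neighbor $u$ (the paper phrases this as a contradiction, you phrase it directly), and the upper bound comes from Theorem~\ref{th:outerlayer} since $u_1$ is not on the outer face. No gaps.
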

\begin{proof}
Let $v$ be an eccentric vertex of $u$, that is, $d(u,v) = n-2$. 
If $e(u_1)<n-3$, then $d(u_1, v) < n-3$. 
This implies that $d(u,v) \leq d(u,u_1) +d(u_1,v) < n-2$; a contradiction. 
So $d(u_1,v) = n-3$. 
Since $u_1$ is an internal vertex, we have $e(u_1)=n-3$. \hfill 
\end{proof}    

\section{Final Remarks}\label{sec:final}
\subsection{Open Questions}\label{subsec:radius}
\paragraph{Degree Sequence.} 
In a general framework of questions involving degree sequences, a few more questions can be asked. 
Find (asymptotics of) the number of (pseudo) line arrangement graphs, which can be constructed that satisfy the conditions of Theorem~\ref{th:degreeSequence}?
As mentioned earlier, the separating examples for line arrangement graphs and pseudoline arrangement graphs imply that the answers are going to be different for the two graph classes. 
Moreover, can we construct some proportion of such graphs.  
	
\paragraph{The Question of Radius.}
As mentioned earlier, the major purpose of proving Theorem~\ref{th:outerlayer} was to find bounds on the radius of pseudoline arrangement graphs. 
The diameter of a pseudoline arrangement graph does not depend on the graph but the number of pseudolines in its realization. 
But one can be easily see that it would not be the case of the radius. 
A line arrangement graph with a centrally symmetric realization on $n$ lines will have a smaller radius than a line arrangement graph with a skewed realization on $n$ lines. 
We suspect it to follow:
$\big\lceil \frac{n}{2} \big\rceil -1 \leq r(G) \leq \big\lfloor \frac{3(n-1)}{4}\big\rfloor$ (for odd $n$). 
The upper bound comes from the star construction. 
Another open problem is to characterize the central vertices in a pseudoline arrangement graph, that is, vertices whose eccentricity equals the graph radius.  

\subsection{Alternate Proof of $d_2\geq 3$ in Theorem~\ref{th:degreeSequence} using Wiring Diagrams}\label{alternate d2>3 pseudoline}
Observe that the leftmost and rightmost intersection point in the wiring diagram of a pseudoline arrangement is always a $2$-vertex; so $d_2 \geq 2$. 
Next, we consider a `restricted wiring diagram' in which there is just one intersection point between the bottom two levels.  
In this case, observe that such an intersection point is also a $2$-vertex. 
This $2$-vertex differs from the leftmost and the rightmost intersection point in the wiring diagram (else one of the pseudolines has just one intersection point in it; a contradiction). 
Thus for a pseudoline arrangement with a wiring diagram that has only one intersection point between the bottom two levels, $d_2 \geq 3$. 
We claim that all pseudoline arrangements have such a restricted wiring diagram. 
To show this, we need to carefully set up the topological sweep that fixes the wiring diagram. Choose the sweep line to originate from an unbounded face that is bounded by two pseudolines in the pseudoline arrangement (this always exists as $d_2 \geq 2$). 
Perform the topological sweep to form the required restricted wiring diagram. 

\begin{figure}
	\centering
	
	\includegraphics[scale=0.7]{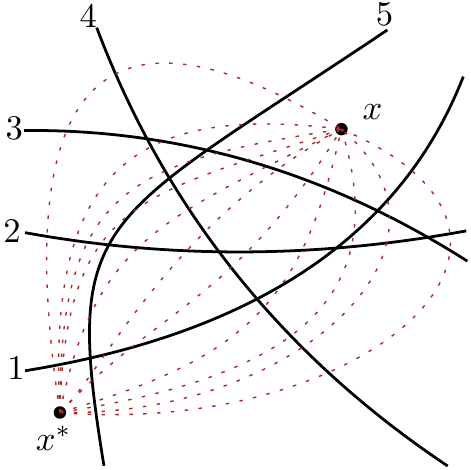}\hspace{1cm} \includegraphics[scale=0.7]{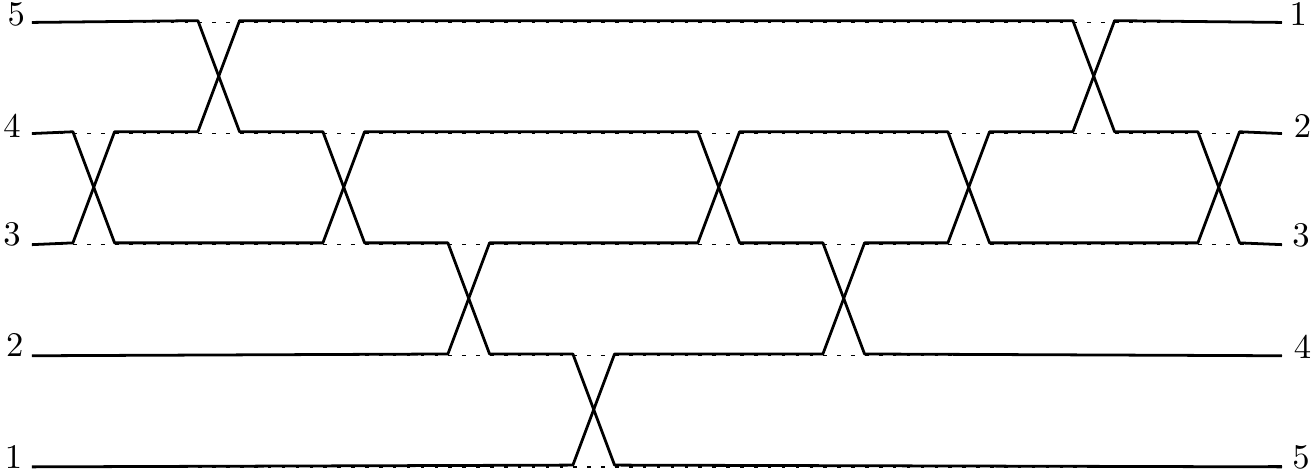}
	\caption{Restricted wiring diagram.}
	\label{fig:res}
	
\end{figure}

\subsection{A Brief Review on Pseudoline Arrangements} \label{ssec: survey pseudoline arrangements}
As mentioned earlier, two of the main reasons driving the study of pseudoline arrangements are (i) the numerous problems and conjectures collected in the survey book by Gr\"unbaum~\cite{grunbaum_arrangements_1972}, and (ii) a consequence of the topological representation theorem of Folkman and Lawrence~\cite{folkman_oriented_1978} that gives a geometric interpretation of oriented matroids of rank three in terms of pseudoline arrangements (see the standard text by Bj\"orner et al.~\cite[Chap.~6]{bjorner_oriented_1999} for detailed discussions). 
The question on the enumeration of non-isomorphic pseudoline arrangements by Knuth has produced a series of rich results by Knuth~\cite{knuth_axioms_1992}, Felsner~\cite{felsner_number_1997}, Felsner and Valtr~\cite{felsner_coding_2011}, and Dumitrescu and Mandal~\cite{dumitrescu_new_2020}. 
Another interesting combinatorial question asks to find the number of triangles in line arrangements. This has been studied by Melchior~\cite{melchior_uber_1941}, Levi~\cite{levi_teilung_1926}, F\"uredi and Palsti~\cite{furedi_arrangements_1984}, Felsner and Kriegel~\cite{felsner_triangles_1999}, and others. 

For further details on line and pseudoline arrangements, see the surveys by 
Erd\H{o}s and Purdy~\cite{erdos_extremal_1995}, and the latest survey by Felsner and Goodman~\cite{felsner_pseudoline_2004}; also see the book by Felsner~\cite[Chap. 5 and 6]{felsner_geometric_2004}. 
Gr\"unbaum~\cite{grunbaum_arrangements_1972} (in the 1970s) was the first to collect relevant results and posed many problems and conjectures on arrangements of lines as well as pseudolines. 
The chapter by Erd\H{o}s and Purdy~\cite{erdos_extremal_1995} also addresses various aspects of arrangements. 
The survey by Felsner and Goodman~\cite{felsner_pseudoline_2004} is the most recent (2017). 
Chapter~5 in Felsner's book~\cite{felsner_geometric_2004} contains results on line arrangements, whereas pseudoline arrangements are studied in Chapter~6. 
Algorithms on arrangements are addressed in the book~\cite{edelsbrunner_algorithms_1987} by Edelsbrunner (also see the book by Matou\v{s}ek~\cite[Chap.~6]{matousek_lectures_2002}). 


\smallskip
Moving on to graph-theoretic aspects, apart from the graph classes defined in this article, many researchers have studied other graph/hypergraph classes obtained from the underlying pseudoline arrangements. Tamaki and Tokuyama~\cite{tamaki_characterization_2003} defined a graph class \emph{pseudoline graph} with the pseudolines as the vertex set and characterized planar graphs as \textit{diamond}-free pseudoline graphs. Eppstein et al.~\cite{eppstein_convex-arc_2013} considered graphs obtained from embeddings of \textit{weak} simple pseudoline arrangements (two pseudolines intersect at most once). They showed that the realizations of so-obtained outerplanar graphs can be represented by a set of chords in a circle. Bose et al.~\cite{bose_coloring_2013} considered various hypergraphs obtained from simple line arrangements and studied their chromatic number and minimum vertex cover.

\subsection{A Brief Review on Degree Sequence-based Characterizations}\label{subsec:degreeSequence}
For a given property $P$, invariant under isomorphism, a degree sequence is said to be (1) \emph{potentially P-graphic} if at least one of its realizations satisfies property $P$ and (2) \emph{forcibly P-graphic} if all its realizations satisfy property $P$. 
Barrus, Kumbhat, and Hartke~\cite{barrus_graph_2008} characterized the graph classes $\mathcal{F}$ such that potentially $\mathcal{F}$-free sequences are also forcibly $\mathcal{F}$-free. 
In this survey, we focus on the recognition of graph classes based on their degree sequences, hence omitting those characterizations that explore the relationship between degree sequences and various graph properties. 
These degree sequence-based recognition characterizations are of two types depending on whether at least one or all the realizations of the degree sequence satisfy the given conditions. 
The latter is known as \emph{degree sequence characterization}, that is, it tells us whether a graph belongs to the graph class solely based on its degree sequence.  
The class of graphs that have a degree sequence characterization is closed under the 2-switch operation. 
Note that a degree sequence characterization is different from forcibly P-graphic characterization (which we shall not cover here): the former characterizes graph classes whereas the latter characterizes the degree sequences. 
Standard, but old, surveys are by Hakimi and Schimeichel~\cite{hakimi_graphs_1978} (in 1978) and Rao~\cite{rao_survey_1981} (in 1980).

The result on trees is folklore. 
In 1979, Beineke and Schmeichel~\cite{beineke_degrees_1971} characterized degree sequences of graphs with one cycle. 
It was generalized to cacti graphs by Rao~\cite{rao_degree_1981} in 1981. 
Much later in 2005, B{\'{\i}}y{\'{\i}}ko\u{g}lu~\cite{biyikoglu_degree_2005} further generalized this result to Halin graphs. 
Bose et al.~\cite{bose_characterization_2008} characterized degree sequences of 2-trees in 2008. 
	
Moving on to other graph classes, Hammer and Simeone~\cite{hammer_splittance_1981} in 1981, and Merris~\cite{merris_split_2003} in 2003 gave a degree sequence characterization of split graphs resulting in a linear time recognition algorithm. 
Here we note two things (1) if $G$ is a split graph, then every graph with the same degree sequence as $G$ is also a split graph; and (2) the degree sequence of a split graph does not determine the graph up to isomorphism. 
However, in the case of \emph{threshold graphs}, Hammer and others~\cite{chvatal_set-packing_1973, hammer_degree_1978}, in 1973 and 1978 respectively, gave a degree sequence characterization exploiting the fact that the structure of the threshold graph is completely described by its degree sequence.

\acknowledgements
\label{sec:acknowledgement}
The authors thank Prof. Douglas B. West for his encouragement to pursue the line arrangement graph realization problem, which was the starting point of this work. The authors also thank Dibyayan Chakraborty for suggesting to pursue the questions on eccentricity.  
The authors also thank the reviewers for greatly enhancing both the content and the presentation
of this paper. 
The first and third authors are partially supported by IFCAM project Applications of graph homomorphisms (MA/IFCAM/18/39).

\nocite{*}
\bibliographystyle{plainurl}
\bibliography{sample-dmtcs}
\label{sec:biblio}

\section{Appendix}
	\subsection{Proof of Proposition~\ref{prop:shortest}}\label{proof:prop:shortest}
	\begin{proof}
		We proceed by strong induction on $d(a,b)_l$. 
		For the base case, when $d(a,b)_l=1$, vertices $a$ and $b$ are adjacent on $l$. 
		So $P=ab|_l$ is the unique shortest $a,b$-path. 
		As our induction hypothesis, we assume that for $d(a,b)_l<k$, $P=ab|_l$ is the unique shortest $a,b$-path. 
		
		Now let $d(a,b)_l=k$. 
		Suppose there exists another shortest $a,b$-path $P'$ ($\neq P$). 
		We shall show that ${d(a,b)|_P < d(a,b)|_{P'}}$, thereby contradicting the existence of $P'$. 
		Proposition~\ref{prop:shortest} is implied from the following observations.

		\begin{observation}\label{obs:1line}
			If $V(P) \cap V(P') \supsetneq \{a,b\}$, then $P'=P$. 
		\end{observation}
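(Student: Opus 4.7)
The plan is to leverage the induction hypothesis of Proposition~\ref{prop:shortest} (applied along the pseudoline $l$) together with the standard ``every subpath of a shortest path is itself shortest'' principle. Recall that we are in the inductive step where $d(a,b)_l = k$, $P = ab|_l$, and $P'$ is a second shortest $a,b$-path, with the induction hypothesis giving uniqueness of the on-line path whenever the along-$l$ distance is strictly less than $k$.

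First, I would pick any vertex $c \in (V(P) \cap V(P')) \setminus \{a,b\}$, whose existence is guaranteed by the assumption $V(P) \cap V(P') \supsetneq \{a,b\}$. Since $c \in V(P)$, vertex $c$ lies on the pseudoline $l$, and it is an internal vertex of the on-line path $P = ab|_l$. Consequently, $c$ partitions $P$ into $ac|_l$ and $cb|_l$, and the on-line distances satisfy $d(a,c)_l + d(c,b)_l = d(a,b)_l = k$, with each summand a positive integer and hence strictly less than $k$.

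Next, I would split $P'$ at $c$ into an $a,c$-subpath $P'_1$ and a $c,b$-subpath $P'_2$. Since $P'$ has length $k = d(a,b)$, a routine shortest-path argument (if either subpath were not shortest, concatenating a shorter replacement would yield an $a,b$-walk of length less than $k$, contradicting $d(a,b)=k$) shows that $P'_1$ and $P'_2$ are shortest $a,c$- and $c,b$-paths, respectively. Now $a,c$ lie on the common pseudoline $l$ with $d(a,c)_l < k$, so the induction hypothesis forces $P'_1 = ac|_l$; symmetrically $P'_2 = cb|_l$. Concatenating yields $P' = ac|_l \cup cb|_l = ab|_l = P$, as required.

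The only place that requires a little care is verifying that the two subpath distances add up to exactly $k$, i.e.\ that $c$ truly sits ``between'' $a$ and $b$ along $l$; but this is immediate from the fact that $P = ab|_l$ is a simple path on the single pseudoline $l$ and $c$ is an internal vertex of $P$. Aside from that bookkeeping, the argument is a short direct deduction from the induction hypothesis, so I do not anticipate any serious obstacle.
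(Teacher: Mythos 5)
Your proof is correct and follows essentially the same route as the paper's: split both paths at an interior common vertex $c$, note $d(a,c)_l, d(c,b)_l < k$, and invoke the induction hypothesis to force both subpaths of $P'$ onto $l$. You merely make explicit the ``subpaths of shortest paths are shortest'' step that the paper leaves implicit.
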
    
		\begin{proof}[of Observation \ref{obs:1line}]
			Suppose $V(P) \cap V(P')\supsetneq \{a,b\}$; then there exists $c \in V(P) \cap V(P')$ with $c\notin \{a,b\}$. 
			Let $P_1=ac|_P$, $P_2=cb|_P$, $P_1'=ac|_{P'}$ and $P_2'=cb|_{P'}$. 
			Since $d(a,c)_l < k$, by our induction hypothesis, $ac|_l$ is the unique shortest $a,c$-path. 
			Similarly, $cb|_l$ is the unique shortest $c,b$-path.
			Thus $P_1=P_1'$ and $P_2=P_2'$; hence $P=P'$. 
		\end{proof}
		
		\medskip    
		So we are left with the case where $V(P) \cap V(P')= \{a,b\}$.
		
		\begin{observation}\label{obs:2line}
			If $V(P) \cap V(P')= \{a,b\}$, then $|P'|>|P|$.
		\end{observation}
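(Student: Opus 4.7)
Because $V(P)\cap V(P')=\{a,b\}$, the union $C=P\cup P'$ is a simple closed curve in the plane, and the key idea is to convert the separation properties of $C$ into a lower bound on $|P'|$. Write $m_1,\ldots,m_{k-1}$ for the pseudolines meeting $l$ at the interior vertices $v_1,\ldots,v_{k-1}$ of $P$: each $m_i$ meets $l$ exactly at $v_i$, which lies strictly between $a$ and $b$ on $l$, so $m_i$ separates $a$ from $b$ in the plane, and the curve $P'$ must cross every such $m_i$ an odd number of times.

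\smallskip
\noindent I will classify each interior vertex of $P'$ as a \emph{turn} (the two $P'$-edges at the vertex lie on different pseudolines) or a \emph{pass-through} (both on the same pseudoline). At a pass-through vertex $w$ on pseudolines $\{p,q\}$, exactly one of $p,q$ is \emph{transverse} to $P'$ (the one not carrying the $P'$-edges at $w$); simpleness of the arrangement (at most two pseudolines through $w$) then forces distinct transverse pseudolines to occur at distinct pass-through vertices. Moreover, because each edge of $P'$ has no interior arrangement vertices, a pseudoline $m$ that is not used as an edge-pseudoline of $P'$ intersects $P'$ only at isolated vertices of $P'$ lying on $m$, and at each such vertex $m$ is necessarily the transverse one.

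\smallskip
\noindent Next let $r$ denote the number of the $m_i$ that appear as edge-pseudolines of $P'$ and $f$ the number of other pseudolines (outside $\{l,l_2,l_3,m_1,\ldots,m_{k-1}\}$) that do. The hypothesis $V(P)\cap V(P')=\{a,b\}$ rules out the first edge of $P'$ being on $l$ (its other endpoint would be an interior vertex of $P$), so it must lie on $l_2$; likewise the last edge lies on $l_3$. Hence the set of edge-pseudolines of $P'$ contains $\{l_2,l_3\}$ together with the $r$ middle and $f$ other used pseudolines, giving at least $2+r+f$ distinct entries, and a run-length argument on the edge-pseudoline sequence of $P'$ produces at least $(2+r+f)-1=1+r+f$ turn vertices. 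For each of the $k-1-r$ middle pseudolines $m_i$ not used as an edge, the odd-crossing parity together with the previous paragraph yields at least one pass-through vertex on which $m_i$ is transverse; distinctness across these $m_i$ gives at least $k-1-r$ pass-through vertices. Summing the two counts,
\[
|P'|-1 \;\geq\; (1+r+f)+(k-1-r) \;=\; k+f \;\geq\; k,
\]
so $|P'|\geq k+1>|P|$, which is Observation~\ref{obs:2line}.

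\smallskip
\noindent The step I expect to be most delicate is establishing that each middle pseudoline $m_i$ not used as an edge of $P'$ genuinely produces a pass-through vertex at which it is transverse. This combines the topological parity that $P'$ crosses $m_i$ an odd number of times (because $a$ and $b$ lie in different pseudohalfplanes of $m_i$) with the arrangement fact that any intersection of $P'$ with $m_i$ must occur at a vertex of $P'$ on $m_i$ (an interior point of a $P'$-edge on a pseudoline $\neq m_i$ cannot lie on $m_i$, since edges have no interior arrangement vertices). Both facts rely essentially on the simpleness of the arrangement and the definition of edges as intervals between consecutive intersection vertices along a pseudoline.
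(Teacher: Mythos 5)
Your overall strategy is sound and genuinely different from the paper's: the paper assigns to each interior vertex $b_i$ of $P'$ a ``new'' pseudoline $l_i$, proves these are pairwise distinct by invoking the induction hypothesis (so it leans on $P'$ being a shortest path), and then injects the $k-1$ pseudolines through the interior vertices of $P$ into that list; you instead run a direct turn/pass-through count driven by the parity of crossings with separating pseudolines, and never use that $P'$ is a shortest path, so you are in fact proving the observation for arbitrary paths $P'$ with $V(P)\cap V(P')=\{a,b\}$. Most of your steps check out: the odd-crossing claim for each $m_i$, the fact that a pseudoline carrying no edge of $P'$ meets $P'$ only at pass-through vertices where it is the transverse line, the injectivity of the assignment from unused $m_i$'s to pass-through vertices, and the run-length bound on turns.

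The gap is in the sentence claiming that the hypothesis ``rules out the first edge of $P'$ being on $l$ (its other endpoint would be an interior vertex of $P$).'' This justification fails when $a$ is not an end vertex of the span of $l$: the first edge of $P'$ may lie on $l$ with its other endpoint equal to the neighbour of $a$ on $l$ on the side \emph{away} from $b$, which is not a vertex of $P$, so $V(P)\cap V(P')=\{a,b\}$ does not exclude it; the same applies to the last edge at $b$. In the worst case both end edges lie on $l$, your guaranteed list of distinct edge-pseudolines shrinks from $\{l_2,l_3\}\cup\cdots$ to $\{l\}\cup\cdots$, the turn count drops to $r+f$, and the final estimate degrades to $|P'|\ge k+f$, which is not strict when $f=0$. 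The argument is repairable: if $P'$ leaves $a$ along $l$ away from $b$, it must eventually turn off $l$ at some vertex $b_j$ lying on $l$ beyond $a$ (it cannot return to $b$ along $l$ without revisiting $a$), and the second pseudoline through $b_j$ meets $l$ at $b_j\notin\{a,b,v_1,\dots,v_{k-1}\}$, hence is none of $l,l_2,l_3,m_1,\dots,m_{k-1}$ and forces $f\ge 1$; alternatively, apply your parity argument to $l_2$, which separates $b_1$ from $b$, along the subpath from $b_1$ to $b$. As written, though, this step is unjustified and the strict inequality is not established.
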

		\begin{proof}[of Observation \ref{obs:2line}]
			Let $P=ab|_l=aa_1a_2\ldots a_{k-1}b$ and $P'=ab_1b_2\ldots b_sb$ such that $V(P) \cap V(P')= \{a,b\}$, then it suffices to prove that $s>k-1$.

			Consider the path $P'$. 
			For $i \in [s]$, let $l_i$ represent the pseudoline at $b_i$ that does not contain the edge $b_{i-1}b_i$ for $i>1$, or the edge $ab_1$ for $i=1$. 
			
			First we claim that $l_i\neq l_j$, for $i\neq j$ and $i,j \in [s]$, that is, each pseudoline $l_i$ is unique. 
			Suppose for some $b_t$, with $t\leq s$, the pseudoline encountered, $l_t$, is not unique. 
			Then there exists some $b_r$, with $r\neq t$, such that $l_r=l_t$. 
			Our induction hypothesis implies that $b_rb_{r+1}\ldots b_t$ (without loss of generality assume $r<t$) lies on $l_t$. 
			In particular, $b_{t-1}b_t$ lies on $l_t$; a contradiction (by definition of $l_t$). 
			Hence at each $b_i$, we encounter an unique pseudoline $l_i$. 
			
			For each $j \in [k-1]$, the line intersecting $l$ at $a_j$ is some $l_i$, for $i\in [s]$ (that contains vertex $b_i$ in $P'$).  
			This occurs as $P\cup P'$ is a closed curve with $P$ being on a pseudoline, and hence no other pseudoline intersects $P$ twice. 
			Thus $s \geq k-1$. 
			For strict inequality observe that there exists a $b_u$ with $u\leq s$, such that $l_u\cap l =\{b\}$.
			This $l_u$ does not contain any $a_j$. 
			Thus $s>k-1$. 
		\end{proof}
		
		\medskip     
		Observations~\ref{obs:1line} and \ref{obs:2line} imply that $P$ is the unique shortest $a,b$-path. \hfill 
	\end{proof}    
\end{document}